\declaretheorem[name=Assumption, refname={Assumption, Assumptions}, Refname={Assumption, Assumptions}, sibling=theorem]{assumption}
\declaretheorem[name=Notation, refname={Notation, Notations}, Refname={Notation, Notations}, sibling=theorem]{notation}
\setlist[enumerate]{leftmargin=*}
\def\I{{\mathbb I}}
\def\I{{\mathbb I}}
\def\P{{\mathbb P}}
\def\E{{\mathbb E}}
\def\V{{\mathbb V}}
\def\X{{\vec{X}}}
\def\x{{\vec{x}}}
\def\y{{\vec{y}}}
\def\Y{{\vec{Y}}}
\def\Re{{\mathbb R}}
\def\half{\frac{1}{2}}
\def\cc{{\vec{c}}}
\def\I{{\mathbb I}}
\def\subH1{{ H_1'}}
\def\x{{\vec{x}}}
\def\mX{{\mathbb{X}}}
\def\X{{\vec{X}}}
\def\Y{{\vec{Y}}}
\def\y{{\vec{y}}}
\def\E{{\mathbb E}}
\def\P{{\mathbb P}}
\def\V{{\mathbb V}}
\def\Re{{\mathbb R}}
\def\sD{\mathcal{D}}
\def\sH{\mathcal{H}}
\def\sP{\mathcal{P}}
\def\sX{\mathcal{X}}
\def\to{{\theta_1}}
\def\ts{{\theta^*}}
\def\tz{{\theta_0}}
\def\Tz{{\Theta_0}}
\def\Tzc{{\Theta_0^c}}
\def\limn{\lim_{n \rightarrow \infty}}
\def\Beps{{\boldsymbol{\epsilon}}}
\def\Bbeta{{\boldsymbol{\beta}}}
\def\Bgamma{{\boldsymbol{\gamma}}}
\renewcommand{\vec}[1]{\mathbf{#1}}
\newcommand{\seqn}[1]{(#1_n)_{n \in \mathbb{N}}}
\title{Agnostic tests can control 
the type I and type II errors simultaneously}
\author[1\authfn{1}]{Victor Coscrato}
\author[1\authfn{1}]{Rafael Izbicki}
\author[1\authfn{1}]{Rafael Bassi Stern}
\affil[1]{Departament of Statistics, Federal University of S\~{a}o Carlos, S\~{a}o Carlos, S\~{a}o Paulo, 13565-905, Brazil}
\runningauthor{Victor Coscrato et al.}
\newcommand{\gfbstfig}{
 \begin{tikzpicture}[thick,scale=0.75, every node/.style={scale=0.9}]
  \draw (-6,2) circle (1.5);
  \draw [fill=lightgray] (-6,2) circle (0.8);
  \draw (-8,0) -- (-4,0) -- (-4,4) -- (-8,4) -- (-8,0);;
  \node at (-6,2) {{\large$R(x)$}}; 
  \node at (-6,0.8) {{\large$H_0$}};
  \node at (-7.5,.50) {{\large$H_0^c$}};
  \node at (-6,4.25) {{\large 	$\phi(x)=0$}};
			
  \draw (-1.4,2.4) circle (1.1);
  \draw [fill=lightgray] (0.2,0.9) circle (0.7);
  \draw (-3,0) -- (1,0) -- (1,4) -- (-3,4) -- (-3,0);;
  \node at (0.2,0.9) {{\large$R(x)$}}; 
  \node at (-1.4,2.4) {{\large$H_0$}};
  \node at (-2.5,.50) {{\large$H_0^c$}}; 
  \node at (-1,4.25) {{\large $\phi(x)=1$}};	
			
  \draw [fill=lightgray] (5.2,1.9) circle (0.7); 
  \draw (4.4,2.4) circle (1.1);
  \draw (2,0) -- (6,0) -- (6,4) -- (2,4) -- (2,0);;
  \node at (5.2,1.9) {{\large$R(x)$}}; 
  \node at (4.2,2.5) {{\large$H_0$}};
  \node at (2.5,.50) {{\large$H_0^c$}};
  \node at (4,4.25) {{\large $\phi(x)=1/2	$}};	
 \end{tikzpicture} \\ 
}
\begin{document}

\maketitle 

\begin{abstract}
Despite its common practice, 
statistical hypothesis testing presents
challenges in interpretation.
For instance, in the standard frequentist framework
there is no control of the type II error.
As a result, the non-rejection of
the null hypothesis $(H_0)$
cannot reasonably be interpreted as its acceptance.
We propose that this dilemma can
be overcome by using agnostic hypothesis tests,
since they can control 
the type I and II errors simultaneously.
In order to make this idea operational,
we show how to obtain agnostic hypothesis
in typical models. For instance, we show how to build
(unbiased) uniformly most powerful agnostic tests and
how to obtain agnostic tests from standard p-values.
Also, we present conditions such that 
the above tests can be made logically coherent.
Finally, we present examples of
consistent agnostic hypothesis tests.

\keywords{
Hypothesis tests,
Agnostic tests,
Uniformly most powerful tests,
Logical consistency,
Three-decision problem}
\end{abstract}

\section{Introduction}

Despite its common practice, 
statistical hypothesis testing presents
challenges in interpretation.
For instance, some understand that
an hypothesis test can either accept or reject 
the null hypothesis, $H_0$.
However, in this paradigm 
the probability of
accepting $H_0$ can be 
high even when $H_0$ is false.
Therefore, it is possible to
obtain the undesirable result of
accepting $H_0$ even when 
this hypothesis is unlikely.

In order to deal with this problem,
others propose that an hypothesis test should
either reject or \emph{fail to reject} $H_0$
(\citealt[p. 374]{Casella2002}
and \citealt[p. 545]{Degroot2002}).
Such a position can also lead to
challenges in interpretation,
since the practitioner often wishes
to be able to assert $H_0$ \citep{Levine2008b}.
For example, in regression analysis 
non-significant predictors are
often considered to not affect 
the response variable and are
removed from the model. More generally,
scientists often wish to assert a theory
\citep{Stern2011,Stern2017}.

\citet{Neyman1976}[p.14] briefly introduces 
an alternative to the above paradigms
to hypothesis testing.
In this setting, an hypothesis test 
can have three outcomes:
reject $H_0$, accept $H_0$, or 
remain in doubt about $H_0$ ---
the agnostic  decision.
This third decision allows the hypothesis test to 
commit a less severe error (remain in doubt) whenever
the data doesn't provide strong evidence either
in favor or against the null hypothesis.
This approach, which was called
agnostic hypothesis testing,
was further developed in
\citet{Berg2004,Esteves2016,Esteves2017}.
This framework allows the acceptance of $H_0$
while simultaneously controlling the
type I and II errors through the agnostic decision.
As a result, it is possible to control
the probability that $H_0$ is accepted when
$H_0$ is false.

Although agnostic decisions have been used in classification problems with great success \citep{Lei2014,Jeske2017two,Jeske2017,Sadinle2017}
the agnostic hypothesis testing framework has
only started to be explored.
Here, we generalize to arbitrary hypotheses
the setting in \citet{Berg2004}, 
which applies only to
hypotheses of the form:
$H_i:\theta=\theta_i$, for $i \in \{0,1\}$.
This generalization allows the translation of
standard concepts, such as level, size, power, p-value,
unbiased tests,
and uniformly most powerful test into 
the framework of agnostic hypothesis testing.
Within this  framework, 
we create new
versions of  
standard statistical techniques, such as
t-tests, regression analysis 
and analysis of variance, 
which simultaneously control
type I and type II errors.

\Cref{sec:power} formally defines agnostic tests and
concepts that are used for controlling their error,
such as level, size and power.
\Cref{subsec:ump,subsec:umpu} use these definitions
to generalize the framework in \citet{Berg2004};
they derive agnostic tests that are
uniformly most powerful tests and
unbiased uniformly most powerful tests.
Since it can be hard to obtain the above tests in
complex models, \Cref{sec:pvalue} derives a
general approach for controlling the error of
agnostic tests that is based on p-values.
\Cref{sec:region} advances results that
were obtained in 
\citet{Esteves2016,Esteves2017} and
shows that agnostic tests can
control type I and II errors while 
retaining logical coherence.
\Cref{sec::alternative} discusses how to control
the type I and II errors while 
obtaining consistent agnostic tests.
All proofs are presented in
the supplementary material.

\section{The power of agnostic tests}
\label{sec:power}

We consider a setting in which 
the hypotheses that are tested are 
propositions about a parameter, $\theta$,
that assumes values in the 
parameter space, $\Theta$.
Specifically, the null hypotheses, $H_0$, are 
of the form, $H_0: \theta \in \Tz$,
where $\Tz \subset \Theta$.
The alternative hypotheses, $H_1$, are 
of the form $H_1: \theta \in \Tzc$.
In order to test $H_0$,
we use data, $\X$,
which assumes values on 
the sample space, $\sX$.
Also, $\P_{\tz}$ denotes
the probability measure over $\sX$
when $\theta=\tz \in \Theta$.

$H_0$ is tested through an agnostic test. 
An agnostic test is a function that,
for each observable data point,
determines whether $H_0$ should be
rejected, accepted or remain undecided.
Let 
$\mathcal{D} = \left\{0,\half,1\right\}$ denote
the set of possible outcomes of the test:
accept $H_0$ (0), reject $H_0$ (1), and
remain agnostic $\left(\half\right)$.

\begin{definition}
 \label{def:agnostic}
 An agnostic test is a function,
 $\phi: \sX \rightarrow \sD$.
\end{definition}

\begin{definition}
 \label{def:std}
 An agnostic test, $\phi$, is a
 standard test if Im$[\phi]=\{0,1\}$.
\end{definition}

An agnostic test can have $3$ types of errors.
The type I and type II errors of
agnostic tests are defined in the same way
as those of standard tests. That is, 
a type I error occurs when the test 
rejects $H_0$ and $H_0$ is true.
Similarly, a type II error occurs when
the test accepts $H_0$ and $H_0$ is false.
A type III error occurs whenever the
test remains agnostic.
An agnostic test can be designed to
control the errors of type I and II.

\begin{definition}
 An agnostic test, $\phi$, has
 $(\alpha,\beta)$-level if the
 test's probabilities of committing errors
 of type I and II are controlled
 by, respectively, $\alpha$ and $\beta$.
 That is,
 \begin{align*}
  \alpha_{\phi} &:= 
  \sup_{\tz \in H_0}
  \P_{\tz}(\phi=1) = \alpha \\
  \beta_{\phi} &:= 
  \sup_{\to \in H_1}
  \P_{\to}(\phi=0) = \beta
 \end{align*}
 Similarly, $\phi$ has size
 $(\alpha,\beta)$ if the probabilities
 of committing errors of type I and II
 are upper bounded by $\alpha$ and $\beta$.
 That is,
 $\alpha_{\phi} \leq \alpha$ and
 $\beta_{\phi} \leq \beta$.
\end{definition}

Agnostic tests can be compared
by means of their power. The power function of 
a  test is the probability that 
it doesn't commit an error.
That is, the probability that 
it accepts $H_0$ when $H_0$ is true or 
rejects $H_0$ when $H_0$ is false.

\begin{definition}
 The power function of an agnostic test, $\phi$,
 is denoted by $\pi_{\phi}(\theta)$.
 \begin{align*}
  \pi_{\phi}(\theta) &=
  \begin{cases}
   \P_{\theta}(\phi = 0), 
   & \text{if } \theta \in H_0 \\
   \P_{\theta}(\phi = 1),
   & \text{if } \theta \in H_1
  \end{cases}
 \end{align*}
\end{definition}

\begin{definition}
 Let $\phi_1$ and $\phi_2$ be agnostic tests.
 We say that $\phi_1$ is
 uniformly more powerful than $\phi_2$ for
 $H_0$ and write $\phi_1 \succeq \phi_2$ if,
 for every $\theta \in \Theta$,
 $\pi_{\phi_1}(\theta) \geq \pi_{\phi_2}(\theta)$.
\end{definition}

\subsection{Uniformly most powerful tests}
\label{subsec:ump}

\begin{definition}
 \label{def:ump}
 An $(\alpha,\beta)$-level agnostic test, 
 $\phi^*$, is uniformly most powerful (UMP) if,
 for every other $(\alpha,\beta)$-size
 agnostic test, $\phi$, 
 $\phi^* \succeq \phi$.
\end{definition}

In the following,
\cref{assumption:k-r} presents 
general conditions under which 
we can find UMP agnostic tests.
These conditions are the same as
the ones that are typically used
in the  standard frequentist framework
\citep{Casella2002}[p.391].

\begin{assumption} \
 \label{assumption:k-r}
 \begin{enumerate}
  \item For every $\theta \in \Theta$,
  $\P_{\theta}$ is absolutely continuous
  with respect to the Lebesgue measure, 
  $\lambda$, and
  $f_{\theta}(x) := \frac{d\P_{\theta}}{d\lambda}(x) > 0$.
  \item There exists a 
 sufficient statistic for $\theta$, $T$,
 and the likelihood is monotone over $T$.
 \end{enumerate}
\end{assumption}

\Cref{rule::agnostic} and \Cref{thm:k-r} 
present the agnostic tests that 
are UMP under \Cref{assumption:k-r}.

\begin{definition}
 \label{rule::agnostic}
 Let $T$ be a statistic and $c_0 \leq c_1$.
 The agnostic test, $\phi_{T,c_0,c_1}$, is
 \begin{align*}
  \phi_{T,c_0,c_1}(x) &=
  \begin{cases}
   0     & \text{, if $T(x) \leq c_0$} \\
   1     & \text{, if $T(x) > c_1$} \\
   \half & \text{, otherwise.}
   \end{cases}
 \end{align*}
\end{definition}

\begin{theorem}
 \label{thm:k-r}
 Let $H_{0} = \{\theta \in \Theta: \theta \leq \ts\}$, $c_0 \in \Re$ be such that
 $\sup_{\to \in H_1}\P_{\to}(T(X) \leq c_0)=\beta$, and $c_1 \in \Re$ be such that
 $\sup_{\tz \in H_0}\P_{\tz}(T(X) > c_1)=\alpha$.
 Under \Cref{assumption:k-r},
 \begin{enumerate}
  \item If $c_0 \leq c_1$, then
  $\phi_{T,c_0,c_1}$ is an UMP
  $(\alpha,\beta)$-size agnostic test.
  \item If $\alpha$ and $\beta$ are such that $c_0>c_1$ (and thus $\phi_{T,c_0,c_1}$ is not well defined), then let
  $\Phi=\{\phi_{T,c,c}: c_1 \leq c \leq c_0\}$.
  For every $(\alpha,\beta)$-size
  agnostic test, $\phi$,
  there exists $\phi^* \in \Phi$ such that
  $\phi^* \succeq \phi$.   
 \end{enumerate}
\end{theorem}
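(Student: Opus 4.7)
The plan is to decompose any agnostic test $\phi$ with $(\alpha,\beta)$-size into two associated standard tests and apply classical Karlin--Rubin to each. Define $\psi_R(x):=\I[\phi(x)=1]$ and $\psi_A(x):=\I[\phi(x)=0]$. Then $\psi_R$ is a level-$\alpha$ standard test of $H_0$ against $H_1$, while $\psi_A$ is a level-$\beta$ standard test of $H_1$ against $H_0$ (with the roles of null and alternative swapped). Under \Cref{assumption:k-r}, classical Karlin--Rubin implies that $\I[T>c_1]$ is UMP level-$\alpha$ for $H_0$ against $H_1$ and $\I[T\leq c_0]$ is UMP level-$\beta$ for $H_1$ against $H_0$, which yields the two key pointwise inequalities
\begin{align*}
 \P_\theta(\phi = 1) &\leq \P_\theta(T > c_1) \quad \text{for every } \theta \in H_1,\\
 \P_\theta(\phi = 0) &\leq \P_\theta(T \leq c_0) \quad \text{for every } \theta \in H_0.
\end{align*}

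For Part 1, since $c_0 \leq c_1$ the test $\phi^{\star}:=\phi_{T,c_0,c_1}$ is well defined, $(\alpha,\beta)$-size by construction, and its power function equals $\P_\theta(T\leq c_0)$ on $H_0$ and $\P_\theta(T>c_1)$ on $H_1$. The two pointwise inequalities above then give $\pi_{\phi^{\star}}(\theta)\geq\pi_\phi(\theta)$ for every $\theta$ and every $(\alpha,\beta)$-size competitor $\phi$, so $\phi^{\star}$ is UMP.

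For Part 2, the obvious dominator $\phi_{T,c_0,c_1}$ is no longer a valid agnostic test. Given a competitor $\phi$, set $\alpha':=\sup_{H_0}\P(\phi=1)\leq\alpha$ and $\beta':=\sup_{H_1}\P(\phi=0)\leq\beta$, and let $c_1^{\alpha'}\geq c_1$ and $c_0^{\beta'}\leq c_0$ denote the Karlin--Rubin thresholds at these tighter sizes. When $c_0^{\beta'}\leq c_1^{\alpha'}$, the refined test $\phi_{T,c_0^{\beta'},c_1^{\alpha'}}$ is a valid agnostic test dominating $\phi$ by the Part 1 argument at size $(\alpha',\beta')$, and any $c$ in the nonempty interval $[\max(c_1,c_0^{\beta'}),\min(c_0,c_1^{\alpha'})]\subseteq[c_1,c_0]$ makes $\phi_{T,c,c}$ further dominate it, because $\P_\theta(T\leq c)\geq\P_\theta(T\leq c_0^{\beta'})$ on $H_0$ (as $c\geq c_0^{\beta'}$) and $\P_\theta(T>c)\geq\P_\theta(T>c_1^{\alpha'})$ on $H_1$ (as $c\leq c_1^{\alpha'}$).

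The main obstacle is the remaining subcase $c_0^{\beta'}>c_1^{\alpha'}$, in which the Karlin--Rubin thresholds for $\phi$ overlap and no $c$ dominates through the two pointwise bounds alone. My plan here is a continuity argument using
\begin{equation*}
 g(c):=\sup_{\theta\in H_0}\bigl[\P_\theta(\phi=0)-\P_\theta(T\leq c)\bigr],\qquad h(c):=\sup_{\theta\in H_1}\bigl[\P_\theta(\phi=1)-\P_\theta(T>c)\bigr].
\end{equation*}
These are continuous in $c$ with $g$ nonincreasing and $h$ nondecreasing, and the inequalities above give $g(c_0)\leq 0$ and $h(c_1)\leq 0$. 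It suffices to show that $\{g\leq 0\}\cap\{h\leq 0\}\cap[c_1,c_0]\neq\emptyset$, which amounts to ruling out the possibility that $g,h>0$ jointly on some subinterval of $(c_1^{\alpha'},c_0^{\beta'})$. The extra slack needed to do so comes from $\P_{\theta^*}(\phi=0)+\P_{\theta^*}(\phi=1)\leq 1<\alpha'+\beta'$ in this subcase, which forces the suprema defining $\alpha'$ and $\beta'$ to be attained away from $\theta^*$ and creates enough margin near $\theta^*$ in the Karlin--Rubin inequalities to close the apparent gap and produce the required $c$.
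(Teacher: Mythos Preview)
Your Part 1 is correct and essentially identical to the paper's argument: decompose the agnostic test into the two standard tests $g_1(\phi)=\I[\phi=1]$ and $g_0(\phi)=\I[\phi=0]$ and apply the classical Karlin--Rubin theorem to each side.

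Your Part 2, however, has a genuine gap in Case B. The final paragraph is a heuristic, not a proof: the observation that $\P_{\theta^*}(\phi=0)+\P_{\theta^*}(\phi=1)\leq 1<\alpha'+\beta'$ is correct, but you never show how this ``slack'' translates into the existence of a single $c\in[c_1,c_0]$ with $g(c)\leq 0$ and $h(c)\leq 0$ simultaneously. Knowing that the suprema defining $\alpha'$ and $\beta'$ are not both attained at $\theta^*$ does not, by itself, give pointwise domination at every $\theta$ for some threshold test; you would still need to compare $\P_\theta(\phi=0)$ and $\P_\theta(T\le c)$ over all of $H_0$ (and symmetrically on $H_1$), and your sketch provides no mechanism for this.

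The paper closes this gap with a cleaner idea that bypasses your case split entirely. Instead of working with the suprema $\alpha',\beta'$, it works at the single boundary point $\theta^*$: choose $c\in[c_1,c_0]$ so that $\P_{\theta^*}(T>c)=\P_{\theta^*}(\phi=1)$ (possible by continuity whenever $\P_{\theta^*}(\phi=1)\ge 1-\beta$; otherwise take $c=c_0$). The crucial step, which is exactly the ``slack'' you sensed but did not exploit, is that $\phi_{T,c,c}$ is a \emph{standard} test, so
\[
\P_{\theta^*}(\phi_{T,c,c}=0)=1-\P_{\theta^*}(\phi_{T,c,c}=1)=1-\P_{\theta^*}(\phi=1)\geq \P_{\theta^*}(\phi=0),
\]
the last inequality holding because $\phi$ may place mass on $\tfrac12$. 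Now Neyman--Pearson (applied once toward $H_1$ from the equality at $\theta^*$, and once toward $H_0$ from the inequality at $\theta^*$) gives domination on both sides. This single-point matching is the missing ingredient; with it, your functions $g,h$ and the subcase analysis become unnecessary.
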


\Cref{thm:k-r} generalizes
several previous results in the literature.
For example, if 
$\Theta=\{\tz,\to\}$ and 
$T(x)=\frac{f_{\to}(x)}{f_{\tz}(x)}$,
then the likelihood is monotone over $T$.
In this setting, \citet{Berg2004} shows that,
if $c_0 \leq c_1$, then 
$\phi_{T,c_0,c_1}$ is the UMP agnostic test.
Also, one can emulate the 
standard frequentist framework by
not controlling the type II error, that is,
by considering $(\alpha,1)$-size tests.
In this case,
$\Phi=\{\phi_{T,c,c}: c \leq c_0\}$ is
the set of $\alpha$-size
UMP tests in the
standard frequentist framework
\citep{Casella2002}[p.391]. 

Similarly to this case in which $\beta=1$,
the second condition in 
\Cref{thm:k-r} occurs whenever
the control over $\alpha$ and $\beta$ 
is sufficiently weak so that there exist
standard tests of size $(\alpha,\beta)$ and
there is no need of using the agnostic decision.
In this case, the tests in $\Phi$ cannot be
uniformly more powerful than one another
because of a trade-off in the power in
each region of $\Theta$.
If $c_2 < c_3$,
$\phi_2=\phi_{T,c_2,c_2}$ and
$\phi_3=\phi_{T,c_3,c_3}$, then
the comparison of the critical regions of
$\phi_2$ and $\phi_3$ reveals that
the power of $\phi_2$ is higher over $H_1$ and
the power of $\phi_3$ is hgiher over $H_0$.
That is, the choice between the elements in
$\Phi$ depends on the desired balance 
between the power over $H_0$ and over $H_1$.

In the following, \Cref{ex:z-test} presents
an application of \Cref{thm:k-r}.

\begin{example}[Agnostic z-test]
 \label{ex:z-test}
 Let $X_1,\ldots,X_n$ be an i.i.d. sample with 
 $X_i \sim N(\mu,\sigma^2)$, where
 $\mu \in \Re:=\Theta$ and 
 $\sigma^2$ is known.
 Let $H_0=\{\mu \in \Theta: \mu \leq \mu_0\}$
 and $T=\bar{X}$ be the sample mean.
 Note that the conditions in
 \Cref{assumption:k-r} are satisfied.
 Furthermore, if $\alpha+\beta \leq 1$, then
 by taking
 $c_0=\mu_0-\sigma n^{-0.5}\Phi^{-1}(1-\beta)$ and 
 $c_1=\mu_0-\sigma n^{-0.5}\Phi^{-1}(\alpha)$,
 one obtains that $c_0 \leq c_1$,
 $\sup_{\theta \in H_0}\P_{\theta}(T > c_1)=\alpha$ 
 and 
 $\sup_{\theta \in H_1}\P_{\theta}(T \leq c_0)=\beta$.
 Therefore, it follows from 
 \Cref{thm:k-r} that $\phi_{T,c_0,c_1}$ is an
 UMP $(\alpha,\beta)$-level agnostic test.

 \Cref{img::one_sample} illustrates
 the probability of each decision of this test
 as well as its power function
 when $\sigma=1$, $n=10$ and
 $\alpha=\beta=0.05$.

 \begin{figure}[!htpb]
  \centering
  \begin{subfigure}{.5\textwidth}
   \centering
   \includegraphics[width=\linewidth]{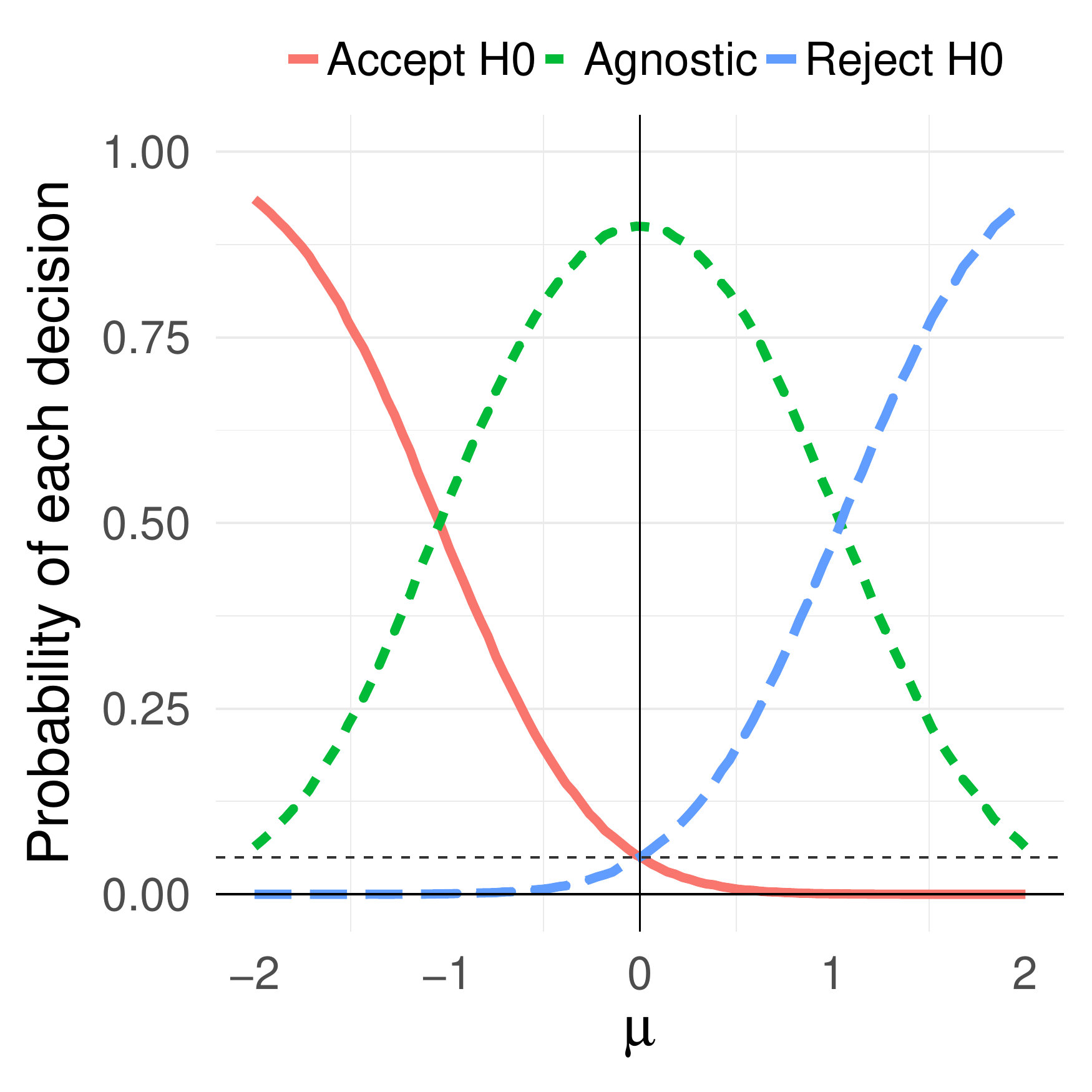}
  \end{subfigure}%
  \begin{subfigure}{.5\textwidth}
   \centering
   \includegraphics[width=\linewidth,page=2]{one_sample}
  \end{subfigure} 
  \caption{Probability of each decision for the UMP $(0.05,0.05)$-level agnostic test for $H_0:\mu \leq  0$ (left) and power function for this test (right). The gray dashed horizontal line shows the values $\alpha=\beta=0.05$.}
  \label{img::one_sample}
 \end{figure}
\end{example}

\subsection{Unbiased uniformly most powerful}
\label{subsec:umpu}

Besides the case studied in \Cref{assumption:k-r},
there often do not exist UMP tests.
For example, they might not exist
when the model has nuisance parameters.
This often occurs because it is possible for
a test to sacrifice power in a region of
$\Theta$ in order to obtain a 
high power in another region.
However, such sacrifices might 
yield undesirable tests.
These tests are characterized 
in the following passage.

An example of an undesirable test is
a test that uses no data.
For example, if $\alpha+\beta \leq 1$ and
$U \sim \text{Uniform}(0,1)$,
then $\phi^U := \phi_{U,\beta,1-\alpha}$ is a test
that uses no data and that 
attains level $(\alpha,\beta)$.
Furthermore, for every $\theta_0 \in H_0$,
$\pi_{\phi^U}(\theta_0) = \beta_{\phi^U}$ and 
also for every $\theta_1 \in H_1$,
$\pi_{\phi^U}(\theta_1) = \alpha_{\phi^U}$.
A generalization of this idea is to consider that a
desirable test, $\phi$, should dominate trivial tests
of the same level, that is,
for every $\theta_0 \in H_0$, 
$\pi_{\phi}(\theta_0) \geq \beta_{\phi}$ and
for every $\theta_1 \in H_1$, 
$\pi_{\phi}(\theta_1) \geq \alpha_{\phi}$.
Such tests are usually called unbiased.

\begin{definition}
 An agnostic test, $\phi$, is unbiased if
 \begin{align*}
  \begin{cases}
   \inf_{\theta_0 \in H_0}
   \P_{\theta_0}(\phi = 0)
   =\pi_{\phi}(\theta_0) 
   &\geq \sup_{\theta_1 \in H_1}
   \P_{\theta_1}(\phi = 0)
   = \beta_{\phi} \\
   \inf_{\theta_1 \in H_1}
   \P_{\theta_1}(\phi = 1)
   =\pi_{\phi}(\theta_1) 
   &\geq \sup_{\theta_0 \in H_0}
   \P_{\theta_1}(\phi = 0)
   = \alpha_{\phi}
  \end{cases}
 \end{align*}
 Note that, if $\phi$ is unbiased, then
 $\alpha_{\phi}+\beta_{\phi} \leq 1$.
\end{definition}

Once only unbiased tests are considered,
it is often possible to find an 
uniformly most powerful test.
In the following,
\Cref{assumption:umpu-1,assumption:umpu-2} 
present general conditions under which 
there exist tests that are
uniformly most powerful among the unbiased tests.
These conditions are the same as
the ones that are typically used
in the  standard frequentist framework
\citep{Lehmann2006}[p.151].

\begin{definition}
 An $(\alpha,\beta)$-level test is
 said to be uniformly most powerful among
 unbiased tests (UMPU) if,
 for every unbiased $(\alpha,\beta)$-size test,
 $\phi$, $\phi^* \succeq \phi$.
\end{definition}

\begin{notation}
 Let $\theta \in \Re^n$. 
 The $i$-th element of $\theta$ 
 is denoted by $\theta(i)$. 
 This notation is useful because
 $\theta_i$ is used to denote 
 an element of $H_i$ and not the
 $i$-th element of $\theta$.
\end{notation}

\begin{assumption} \
 \label{assumption:umpu-1}
 \begin{enumerate}
  \item For every $\theta \in \Theta$,
  $\P_{\theta}$ is absolutely continuous
  with respect to the Lebesgue measure, 
  $\lambda$, and
  $f_{\theta}(x) := \frac{d\P_{\theta}}{d\lambda}(x) > 0$.
  
  \item $\theta \in \Re^n = \Theta$ and
  $f_{\theta}(x)$ is in
  the exponential family, that is,
 there exists $h: \Re \rightarrow \Re^n$
 such that $f_{\theta}(x) = b(x)\exp\left(\theta \cdot h(x) -a(\theta)\right)$.
 
 \item Let $T=(h_{2}(X),\ldots,h_{n}(X))$.
 There exists $V(h(X))$ such that
 $V$ is increasing in $h_{1}(X)$ and
 $T$ and $V$ are independent when
 $\theta(1)=\ts$.
 \end{enumerate}
\end{assumption}

\begin{theorem}
 \label{thm:umpu-1}
 Let $H_{0} = \{\theta \in \Theta: \theta(1) \leq \ts\}$, $\bar{\theta} \in \Theta$ be such that
 $\bar{\theta}(1) = \ts$, 
 $\alpha+\beta \leq 1$, and
 $c_0, c_1 \in \Re$ be such that
 $\P_{\bar{\theta}}(V \leq c_0)=\beta$
 and $\P_{\bar{\theta}}(V > c_1)=\alpha$.
 Under \Cref{assumption:umpu-1},
 $\phi_{V,c_0,c_1}$ is an UMPU
 $(\alpha,\beta)$-level test.
\end{theorem}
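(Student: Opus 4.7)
The plan is to reduce the agnostic UMPU claim to two applications of the classical standard-test UMPU result for the exponential family (see \citet{Lehmann2006}). Given any unbiased $(\alpha,\beta)$-size competitor $\phi$, decompose it into the two indicator tests $\phi^{r}(x) := \I[\phi(x)=1]$ and $\phi^{a}(x) := \I[\phi(x)=0]$, which are disjoint and $\{0,1\}$-valued. The agnostic power then decouples as $\pi_\phi(\tz)=\E_{\tz}[\phi^{a}]$ for $\tz\in H_0$ and $\pi_\phi(\to)=\E_{\to}[\phi^{r}]$ for $\to\in H_1$, so it is enough to dominate each piece separately by the corresponding piece of $\phi_{V,c_0,c_1}$.

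First I would argue that $\phi^{r}$ is an unbiased standard test for $H_0$ at the (possibly smaller) level $\alpha_\phi$: the $(\alpha,\beta)$-size hypothesis yields $\sup_{H_0}\E_\theta[\phi^{r}]=\alpha_\phi$, while agnostic unbiasedness yields $\inf_{H_1}\E_\theta[\phi^{r}]\geq\alpha_\phi$. Under \Cref{assumption:umpu-1}, the classical UMPU theorem then produces the standard UMPU test $\I(V>\tilde c_1)$ at level $\alpha_\phi$, where $\tilde c_1$ is determined by $\P_{\bar\theta}(V>\tilde c_1)=\alpha_\phi$, and it dominates $\phi^{r}$ on $H_1$. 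Because $\alpha_\phi\leq\alpha$ and $V$ has a continuous distribution under $\bar\theta$ (by the absolute continuity in \Cref{assumption:umpu-1}), we get $\tilde c_1\geq c_1$, hence $\I(V>\tilde c_1)\leq\I(V>c_1)$ pointwise, and therefore $\pi_\phi(\to)\leq\pi_{\phi_{V,c_0,c_1}}(\to)$ for every $\to\in H_1$.

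The acceptance piece proceeds symmetrically by applying the classical UMPU theorem to the reversed hypothesis $\widetilde H_0:\theta(1)>\ts$ versus $\widetilde H_1:\theta(1)\leq\ts$. Because $V$ is increasing in $h_1(X)$, the UMPU rejection region for the reversed problem is of the lower-tailed form $\{V\leq c\}$; viewing $\phi^{a}$ as an unbiased standard test for $\widetilde H_0$ at level $\beta_\phi\leq\beta$ and repeating the monotonicity-of-cutoff argument gives $\E_{\tz}[\phi^{a}]\leq\E_{\tz}[\I(V\leq c_0)]=\pi_{\phi_{V,c_0,c_1}}(\tz)$ for every $\tz\in H_0$. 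Two housekeeping facts must also be verified before everything fits together: the condition $\alpha+\beta\leq 1$ together with continuity of $V$ under $\bar\theta$ implies $c_0\leq c_1$, so $\phi_{V,c_0,c_1}$ is a valid agnostic test; and the same monotonicity of the power functions of $\I(V>c_1)$ and $\I(V\leq c_0)$ in $\theta(1)$ that drives the UMPU argument shows that $\phi_{V,c_0,c_1}$ achieves exact $(\alpha,\beta)$-level.

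The main obstacle I expect is the mismatch between the actual sizes $(\alpha_\phi,\beta_\phi)$ of an arbitrary unbiased competitor and the target sizes $(\alpha,\beta)$: the classical UMPU result applies at the competitor's own levels rather than at the design levels, so one cannot invoke it to compare $\phi^{r}$ to $\I(V>c_1)$ directly. This is exactly what the monotone dependence of the cutoff on the size resolves, by converting an UMPU comparison at level $\alpha_\phi$ into a pointwise indicator inequality at level $\alpha$, and likewise on the acceptance side.
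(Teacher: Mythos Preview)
Your proposal is correct and follows essentially the same route as the paper. The paper packages the argument into an auxiliary lemma (its Lemma corresponding to \Cref{lemma:standardize} and \Cref{lemma:umpu-1}) that introduces exactly your indicator tests $g_1(\phi)=\I[\phi=1]$ and $g_0(\phi)=\I[\phi=0]$, checks that agnostic unbiasedness makes each of them an unbiased standard test for the appropriate one-sided problem, and then invokes \citet{Lehmann2006} for each piece. The only cosmetic difference is in how the size mismatch $\alpha_\phi\leq\alpha$ is absorbed: you pass through the intermediate cutoff $\tilde c_1$ with $\P_{\bar\theta}(V>\tilde c_1)=\alpha_\phi$ and then use the pointwise inequality $\I(V>\tilde c_1)\leq\I(V>c_1)$, whereas the paper restricts the parameter space to $\{\theta(1)\geq\ts\}$ so that the size is attained at $\bar\theta$ and applies the UMPU comparison directly at level $\alpha$ (a level-$\alpha_\phi$ unbiased test is in particular a level-$\alpha$ unbiased test). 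Both devices accomplish the same thing. One small point to tidy: for the acceptance side, take the reversed null as the closed set $\widetilde H_0:\theta(1)\geq\ts$ rather than the open $\theta(1)>\ts$, so that $\bar\theta\in\widetilde H_0$ and the size is realized there; the paper does this and then extends from $\{\theta(1)<\ts\}$ to the boundary by continuity.
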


\Cref{thm:umpu-1} uses \cref{assumption:umpu-1}
in order to derive UMPU unilateral tests.
Under the stronger conditions in
\cref{assumption:umpu-2} it is also possible
to derive UMPU bilateral tests, 
as presented in \cref{thm:umpu-2}.

\begin{assumption}
 \label{assumption:umpu-2}
 Besides the conditions in \Cref{assumption:umpu-1},
 also include that
 $$V(h_1(x),T) = a(T)h_1(x) + b(t).$$
\end{assumption}

\begin{definition}
 Let $c_{0,l}, c_{1,l}, c_{0,r}, c_{1,r} \in \Re$
 be such that
 $c_{1,l} \leq c_{0,l} \leq c_{0,r} \leq c_{1,r}$.
 \begin{align*}
  \phi_{V,\cc} &=
  \begin{cases}
   1 & \text{, if } 
   V < c_{1,l} \text{ or } V > c_{1,r} \\
   0 & \text{, if } c_{0,l} \leq V \leq c_{0,r} \\
   \half & \text{, otherwise.}
  \end{cases}
 \end{align*}
\end{definition}

\begin{theorem}
 \label{thm:umpu-2}
 Let $H_{0} = \{\theta \in \Theta: \theta(1) = \ts\}$, $\bar{\theta} \in \Theta$ be such that
 $\bar{\theta}(1) = \ts$, 
 $\alpha+\beta \leq 1$,
 and for each $\gamma \in (0,1)$,
 let $c_{\gamma,l}$ and $c_{\gamma,r}$ be such that
 \begin{align*}
   1-\P_{\bar{\theta}}(c_{\gamma,l} \leq V \leq c_{\gamma,r})
   &= \gamma \\
   \E_{\bar{\theta}}[V
   (1-\I(c_{\gamma,l} \leq V \leq c_{\gamma,r}))]
   &= \gamma \E_{\bar{\theta}}[V]
 \end{align*}
 Let 
 $\cc=(c_{1-\beta,l},c_{\alpha,l},c_{\alpha,r},c_{1-\beta,r})$.
 Under \Cref{assumption:umpu-2}
 $\phi_{V,\cc}$ is an UMPU
 $(\alpha,\beta)$-level test.
\end{theorem}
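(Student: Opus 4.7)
The plan is to reduce the bilateral agnostic problem to two separate UMPU subproblems, one for the rejection region and one for the acceptance region, then glue them together using $\alpha+\beta\le 1$ to ensure disjointness. The argument mirrors the classical bilateral UMPU construction (Lehmann--Romano, Sec.~4.4), extended to the three-decision setting.

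Given any competing unbiased $(\alpha,\beta)$-size agnostic test $\phi$, I decompose it into the two standard test functions $\psi_1 := \I(\phi=1)$ and $\psi_0 := \I(\phi=0)$, subject to $\psi_0+\psi_1\le 1$. The level constraints become $\sup_{H_0}\E_{\theta}[\psi_1]\le\alpha$ and $\sup_{H_1}\E_{\theta}[\psi_0]\le\beta$, and the power function separates as $\pi_\phi(\theta)=\E_\theta[\psi_0]$ on $H_0$ and $\E_\theta[\psi_1]$ on $H_1$. By continuity of $\theta\mapsto\E_\theta[\psi_i]$ along paths that hit the boundary slice $\theta(1)=\ts$, unbiasedness forces similarity on this boundary with $\E_{\bar\theta}[\psi_1]=\alpha$ and $\E_{\bar\theta}[\psi_0]=\beta$.

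Next, by completeness of the sufficient statistic $T$ in the exponential family (\cref{assumption:umpu-1}), similarity is equivalent to Neyman structure, so both subproblems reduce to maximizing conditional expectations given $T$. The affine relation $V=a(T)h_1(X)+b(T)$ turns the conditional law of $V$ given $T=t$ into a one-parameter exponential family in $\theta(1)$ with sufficient statistic $V$. Applying the classical UMPU bilateral theorem to $\psi_1$ at level $\alpha$ yields the UMPU rejection region $\{V<c_{\alpha,l}\}\cup\{V>c_{\alpha,r}\}$, with thresholds characterized by the mass and moment conditions of the statement at $\gamma=\alpha$. Applying the same theorem to $\psi_0$, viewed as a test of the swapped hypotheses --- ``null'' $\{\theta(1)\ne\ts\}$ versus ``alternative'' $\{\theta(1)=\ts\}$ at level $\beta$ --- produces the UMPU acceptance region $\{c_{1-\beta,l}\le V\le c_{1-\beta,r}\}$, with thresholds characterized by the same conditions at $\gamma=1-\beta$.

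Finally, since $\alpha+\beta\le 1$, a short computation using both the mass and the moment equations nests the inner acceptance interval inside the complement of the outer rejection region, so the two indicator sets are disjoint and combine into a well-defined agnostic test $\phi_{V,\cc}$. The pointwise inequalities $\E_\theta[\psi_1^*]\ge\E_\theta[\psi_1]$ on $H_1$ and $\E_\theta[\psi_0^*]\ge\E_\theta[\psi_0]$ on $H_0$ inherited from the two subproblems then give $\pi_{\phi^*}(\theta)\ge\pi_\phi(\theta)$ uniformly in $\theta$, establishing UMPU. The main obstacle is the UMPU claim for the acceptance side: the ``null = one-parameter complement of a point'' configuration is not literally the standard bilateral null, so mirroring the classical proof requires a generalized Neyman--Pearson argument at exterior parameter values, combined with the affinity of $V$ in $h_1(X)$ to recast the derivative-at-$\ts$ condition as the stated moment equation and rule out any test that concentrates mass outside an interval around $\ts$.
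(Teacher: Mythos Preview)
Your decomposition into $\psi_1=\I(\phi=1)$ and $\psi_0=\I(\phi=0)$ and your treatment of the rejection side match the paper exactly: reduce to the classical bilateral UMPU test at level $\alpha$ via Lehmann, Sec.~4.4.

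Where you diverge --- and where you make life much harder than necessary --- is on the acceptance side. The paper uses no UMPU-type argument for $\psi_0$ at all. It simply observes that in the bilateral case $H_0=\{\theta:\theta(1)=\theta^*\}$ lies in the closure of $H_1$. Hence for \emph{any} $(\alpha,\beta)$-size test $\phi$ (unbiasedness is not even invoked here), the bound $\P_{\theta_1}(\phi=0)\le\beta$ for $\theta_1\in H_1$ extends by continuity of the exponential-family probabilities to $\P_{\theta_0}(\phi=0)\le\beta$ for $\theta_0\in H_0$. Since $\P_{\theta_0}(\phi_{V,\cc}=0)=\beta$ by construction, the proposed test already attains the ceiling on $H_0$, and optimality there is immediate --- a two-line argument.

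Your ``swapped hypotheses'' formulation is therefore the wrong tool, as you already suspect. In that configuration every similar test has power exactly $\beta$ at the single alternative point (by the same continuity), so there is nothing for a generalized Neyman--Pearson argument to maximize; the obstacle you flag simply evaporates. Incidentally, your similarity claim $\E_{\bar\theta}[\psi_0]=\beta$ is slightly off: unbiasedness of $\phi$ only gives $\E_{\bar\theta}[\psi_0]=\beta_\phi\le\beta$, not equality with $\beta$. The moment condition at $\gamma=1-\beta$ is not what delivers optimality on $H_0$; its role is to ensure that the acceptance indicator actually satisfies the type~II size bound $\sup_{\theta_1\in H_1}\P_{\theta_1}(\phi_{V,\cc}=0)\le\beta$ (so $\phi_{V,\cc}$ has the claimed level) and nests inside the non-rejection region when $\alpha+\beta\le1$.
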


\begin{example}[Agnostic t-test]
 \label{ex:t-test}
 Let $X_1,\ldots,X_n$ be an i.i.d. sample with 
 $X_i \sim N(\mu,\sigma^2)$, where
 $\theta = (\mu,\sigma^2)$ and
 $\Theta = {\rm I\!R}\times{\rm I\!R^+}$.
 Let $H_0^{\leq}=\{(\mu,\sigma^2) \in \Theta: \mu \leq \mu_0\}$ and also
 $H_0^{=}=\{(\mu,\sigma^2) \in \Theta: \mu=\mu_0\}$.
 Let $V=\frac{\sqrt{n}(\bar{X}-\mu_0)}{\sqrt{(n-1)^{-1}\sum_{i=1}^{n}{(X_i-\mu_0)^2}}}$.
 It follows from \citet{Lehmann2006}[p.153] that
 $V$ satisfies the conditions in
 \Cref{assumption:umpu-1,assumption:umpu-2} for
 testing $H_0^{\leq}$ and $H_0^{=}$.
 Therefore, if $\alpha+\beta \leq 1$, then 
 it follows from
 \Cref{thm:umpu-1,thm:umpu-2} that
 $\phi_{V,c_0,c_1}$ and 
 $\phi_{V,\cc}$ are the UMPU tests
 for $H_0^{\leq}$ and $H_0^{=}$.
Moreover, by defining 
 $T(X)=\frac{\sqrt{n}(\bar{X}-\mu_0)}{\sqrt{(n-1)^{-1}\sum_{i=1}^{n}{(X_i-\bar{X})^2}}}$,
 it follows from
 \citet{Lehmann2006}[p.155] that
 $\phi_{V,c_0,c_1}$ and 
 $\phi_{V,\cc}$ are such that
 \begin{align*}
  \phi_{V,c_0,c_1}(x) &=
  \begin{cases}
   0 & T(x) \leq t_{n-1}(\beta) \\
   1 & T(x) > t_{n-1}(1-\alpha) \\
   \half & \text{, otherwise.}
  \end{cases} &
  \phi_{V,\cc}(x) &=
  \begin{cases}
   0 & \text{, if }
   |T(x)| \leq t_{n-1}(0.5(1+\beta)) \\
   1 & \text{, if } 
   |T(x)| > t_{n-1}(1-0.5\alpha) \\
   \half & \text{, otherwise.}
  \end{cases}
 \end{align*}
 where $t_{n-1}(p)$ is the $p$-quantile of
 a Student's t-distribution with
 $n-1$ degrees of freedom.
 \Cref{img::t_test} illustrates the
 probability of each decision for
 $\phi_{V,c_0,c_1}$ and $\phi_{V,\cc}$ when
 $\mu_0=0$, $\sigma^2=1$, $n=10$ and
 $\alpha=\beta=0.05$.
 The power of both tests at $\mu_0=0$ is
 $\beta$. Indeed, it follows from
 \cref{assumption:umpu-1} that the power
 of a $(\alpha,\beta)$-size test at 
 the border points of $H_0$ 
 cannot be higher than $\min(\alpha,\beta)$.
 \begin{figure}
  \centering
  \begin{subfigure}{.5\textwidth}
   \centering
   \includegraphics[width=\linewidth]{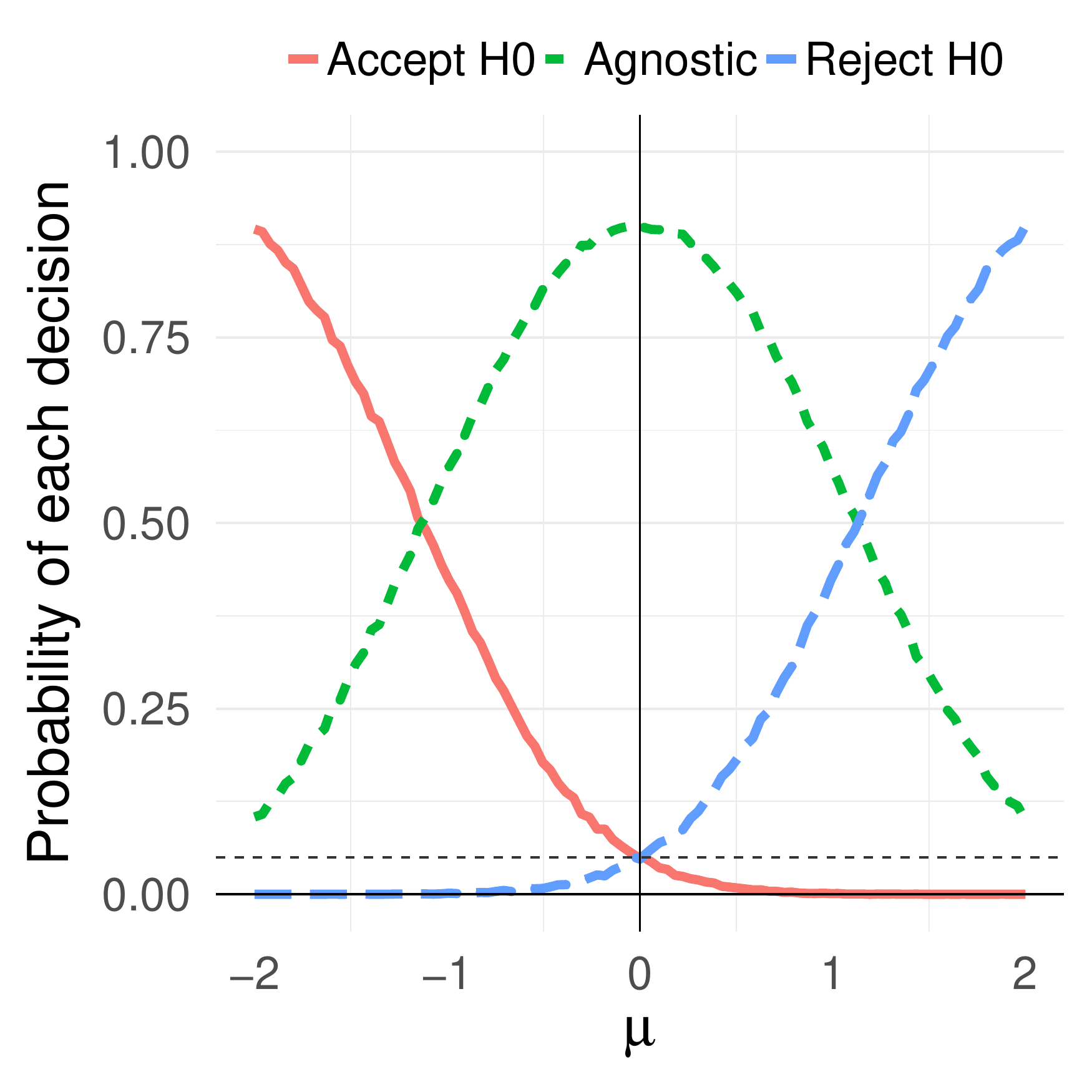}
  \end{subfigure}%
  \begin{subfigure}{.5\textwidth}
   \includegraphics[width=\linewidth]{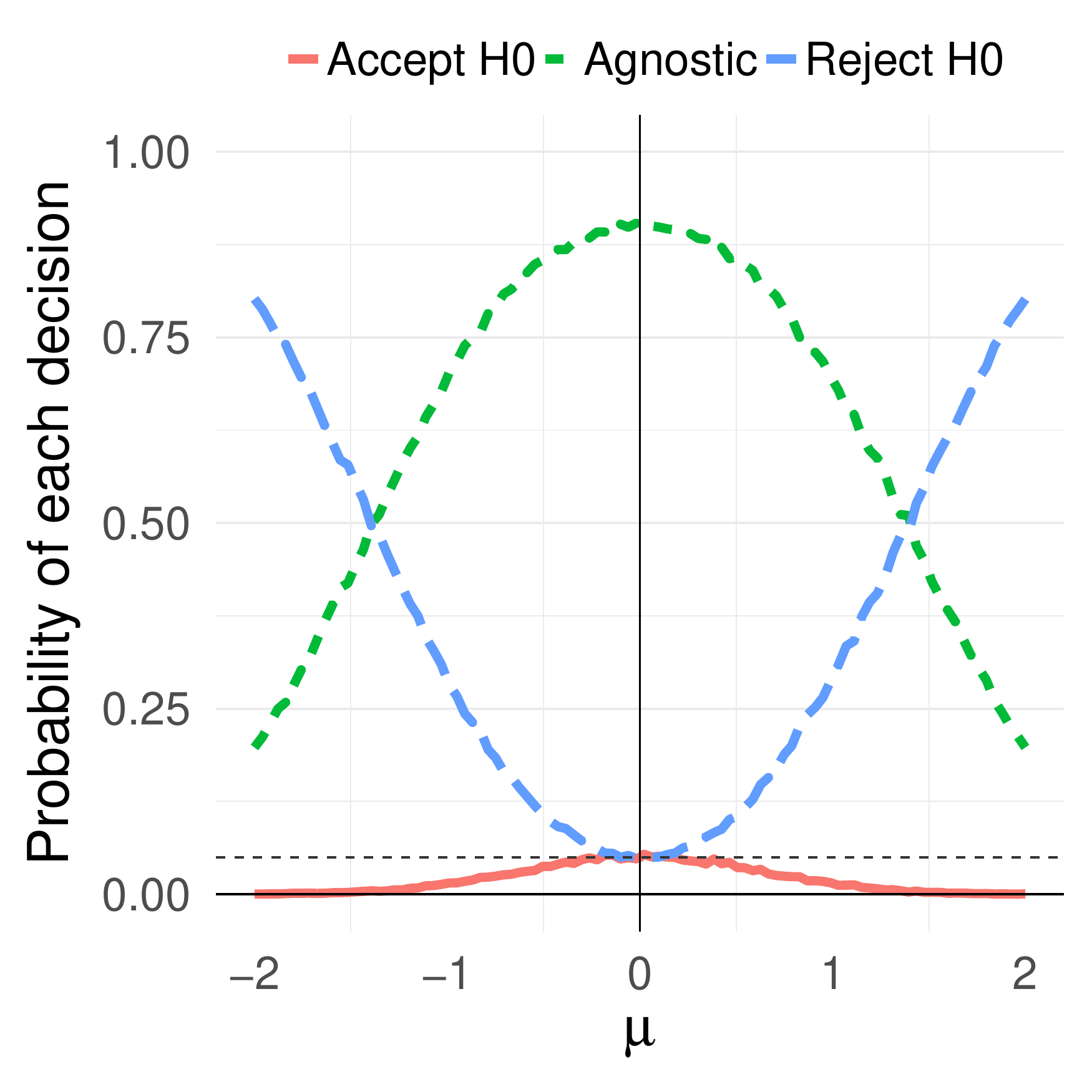}
  \end{subfigure}
  \caption{Probability of each decision 
  for $\phi_{V,c_0,c_1}$ (left) and
  $\phi_{V,\cc}$ (right) when
  $\mu_0=0$, $\sigma^2=1$, $n=10$ and
  $\alpha=\beta=0.05$.}
  \label{img::t_test}
 \end{figure}
\end{example}

\begin{example}[Agnostic linear regression]
 \label{ex:regression}
 Consider a linear regression setting, that is,
 $\Y = \mX\Bbeta +\Beps$, 
 where $d < n$,
 $\Beps \sim N(\textbf{0},\sigma^2 \mathbb{I}_d)$,
 $\mX$ is a $n \times d$ 
 design matrix of rank d and 
 $\Bbeta$ is the $d \times 1$ vector
 with coefficients.
 For a fixed $k \in \Re^{d}$ and $c \in \Re$, let
 $H_0^\leq: k \cdot \Bbeta \leq c$ and
 $H_0^=: k \cdot \Bbeta = c$.
 Let $\alpha+\beta \leq 1$.
 By taking $\hat{\Bbeta}=(\mX^{t}\mX)^{-1}\mX^{t}Y$,
 the least squares estimator for $\Bbeta$,
 it follows from \citet{Shao2003}[p.416] that
 $V=\frac{k^{t}\hat{\beta}-c}{\sqrt{k^{t}(\mX^{t}\mX)^{-1}k}\|Y\|^{2}_{2}(n-d)^{-1}}$
 satisfies the conditions in
 \cref{assumption:umpu-1,assumption:umpu-2}.
 Therefore, the UMPU tests,
 $\phi_{V,c_0,c_1}$ and $\phi_{V,\cc}$, 
 are such that
 \begin{align*}
  \phi_{V,c_0,c_1}(x) &=
  \begin{cases}
   0 & V(x) \leq t_{n-d}(\beta) \\
   1 & V(x) > t_{n-d}(1-\alpha) \\
   \half & \text{, otherwise.}
  \end{cases} &
  \phi_{V,\cc}(\x) &=
  \begin{cases}
   0 & \text{, if }
   |V(x)| \leq t_{n-d}(0.5(1+\beta)) \\
   1 & \text{, if } 
   |V(x)| > t_{n-d}(1-0.5\alpha) \\
   \half & \text{, otherwise.}
  \end{cases}
 \end{align*}
 where $t_{n-d}(q)$ denotes the 
 $q$ quantile of Student's t-distribution
 with $n-d$ degrees of freedom.
\end{example}

\section{General agnostic tests of a given level}
\label{sec:pvalue}

Oftentimes, an UMPU agnostic test 
does not exist or is difficult to derive.
In such a situation, one might be 
willing to use an $(\alpha,\beta)$-level test
that is not uniformly most powerful.
A wide class of such tests can 
be obtained through the 
p-value of standard hypothesis tests.
The definition of p-value is revisited below. 

\begin{definition}
 A nested family of standard tests for $H_0$,
 $\Phi$, is such that
 \begin{enumerate}
  \item For every $\phi \in \Phi$,
  $\phi$ is a standard test.
  \item The function $g: \Phi \rightarrow [0,1]$,
  $g(\phi)=\alpha_{\phi}$ is bijective.
  \item If $\phi_1,\phi_2 \in \Phi$ and
  $\alpha_{\phi_1} \leq \alpha_{\phi_2}$, then
  $\{x \in \sX: \phi_1(x)=1\} \subset \{x \in \sX: \phi_2(x)=1\}$.
 \end{enumerate}
\end{definition}

\begin{example}
 Let $\lambda(x) = -\log\left(\frac{\sup_{\tz \in H_0}f_{\tz}(x)}{\sup_{\theta \in \Theta}f_{\theta}(x)}\right)$. The collection of
 generalized likelihood ratio tests,
 $\Phi=\{\phi_{\lambda,k,k}: k \geq 0\}$, is
 a nested family of standard tests for $H_0$.
\end{example}

\begin{definition}
\label{def::pvalue}
 Let $\Phi$ denote a nested family
 of standard tests for $H_0$.
 The p-value of $\Phi$ against $H_0$,
 $p_{H_0,\Phi}: \sX \rightarrow [0,1]$ is 
 such that
 $p_{H_0,\Phi}(x) = \inf\{\alpha_{\phi}: \phi \in \Phi \wedge \phi(x) = 1\}$.
\end{definition}

Intuitively, if $H_0$ is rejected 
whenever the p-value is smaller than $\alpha$,
then the type I error is controlled by $\alpha$.
Similarly, one might expect that
if $H_0$ is accepted whenever 
the p-value is larger than $1-\beta$, then
the type II error is controlled by $\beta$.
\cref{thm:p} provides conditions under which
this reasoning is valid.

\begin{theorem}
 \label{thm:p}
 Let $\Phi$ be a nested family of
 standard tests for $H_0$ such that,
 for every $\phi \in \Phi$, 
 $\phi$ is an unbiased test.
 Assume that $\Theta$ is
 a connected space and that,
 for every $x \in \sX$,
 $\P_{\theta}(p_{H_0,\Phi}(x) \leq t)$
 is a continuous function over $\theta$. Let
 $p=p_{H_0,\Phi}$.
 Then,  the test
 $\phi_{1-p,\beta,1-\alpha}$, i.e.,
  \begin{align*}
  \phi_{1-p,\beta,1-\alpha}(x) &=
  \begin{cases}
   0     & \text{, if $p(x) \geq 1-\beta$} \\
   1     & \text{, if $p(x) < \alpha$} \\
   \half & \text{, otherwise.}
   \end{cases}
 \end{align*}
  is a $(\alpha,\beta)$-level test for $H_0$.
\end{theorem}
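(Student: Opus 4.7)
The test $\phi_{1-p,\beta,1-\alpha}$ rejects on $\{p<\alpha\}$ and accepts on $\{p\ge 1-\beta\}$, so it suffices to verify the two bounds
$$\sup_{\theta_0\in H_0}\P_{\theta_0}(p(X)<\alpha)\le\alpha \quad\text{and}\quad \sup_{\theta_1\in H_1}\P_{\theta_1}(p(X)\ge 1-\beta)\le\beta.$$
The strategy is to compare each event to the rejection (or acceptance) region of a single standard member of $\Phi$. For $s\in[0,1]$, let $\phi_s\in\Phi$ denote the unique test with $\alpha_{\phi_s}=s$, guaranteed by the bijectivity of $g$. From the infimum defining $p$ together with the nested-rejection-region condition, I would first record the two set inclusions
$$\{x\in\sX : p(x)<s\}\;\subseteq\;\{x\in\sX : \phi_s(x)=1\}\;\subseteq\;\{x\in\sX : p(x)\le s\}.$$
The right inclusion is immediate: $\phi_s(x)=1$ puts $s$ into the set whose infimum is $p(x)$. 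For the left, $p(x)<s$ exhibits some $\phi'\in\Phi$ with $\alpha_{\phi'}<s$ and $\phi'(x)=1$, and nestedness forces $\phi_s(x)=1$.

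Specializing the left inclusion to $s=\alpha$ immediately yields the type I bound: for every $\theta_0\in H_0$,
$\P_{\theta_0}(p(X)<\alpha)\le\P_{\theta_0}(\phi_\alpha(X)=1)\le\alpha_{\phi_\alpha}=\alpha$.

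For the type II bound I plan to use a small $\epsilon$-sandwich in order to skirt the boundary event $\{p=1-\beta\}$. The contrapositive of the right-hand inclusion reads $\{p>s\}\subseteq\{\phi_s=0\}$, so with $s=1-\beta-\epsilon$ for any $\epsilon\in(0,1-\beta)$,
$$\{p\ge 1-\beta\}\;\subseteq\;\{p>1-\beta-\epsilon\}\;\subseteq\;\{\phi_{1-\beta-\epsilon}=0\}.$$
Since $\phi_{1-\beta-\epsilon}\in\Phi$ is a standard unbiased test of size $1-\beta-\epsilon$, for every $\theta_1\in H_1$ one has $\P_{\theta_1}(\phi_{1-\beta-\epsilon}=1)\ge 1-\beta-\epsilon$, so $\P_{\theta_1}(p(X)\ge 1-\beta)\le\beta+\epsilon$. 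Sending $\epsilon\downarrow 0$ closes the bound.

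The main subtle point I anticipate is precisely this boundary event $\{p=1-\beta\}$: a head-on comparison against $\phi_{1-\beta}$ yields only $\P_{\theta_1}(p>1-\beta)\le\beta$, and finishing requires either the $\epsilon$-perturbation above or a zero-probability argument for $\{p=1-\beta\}$ under $\P_{\theta_1}$. The continuity of $\theta\mapsto\P_\theta(p(X)\le t)$ together with connectedness of $\Theta$ is the standard regularity one uses to transfer the boundary behavior of $p$ from $H_0$ into $H_1$ (and to upgrade the size bounds to exact level by producing the matching lower bounds); the sandwich route above sidesteps these hypotheses for the upper inequalities, relying instead on the bijection in the definition of a nested family to produce unbiased tests of every intermediate size $s<1-\beta$.
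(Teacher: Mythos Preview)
Your argument correctly establishes the two \emph{upper} bounds
\[
\sup_{\theta_0\in H_0}\P_{\theta_0}(p(X)<\alpha)\le\alpha,
\qquad
\sup_{\theta_1\in H_1}\P_{\theta_1}(p(X)\ge 1-\beta)\le\beta,
\]
and the $\epsilon$-sandwich for the second bound is a clean device. However, the theorem asserts that $\phi_{1-p,\beta,1-\alpha}$ has $(\alpha,\beta)$-\emph{level}, which in this paper means the equalities $\alpha_\phi=\alpha$ and $\beta_\phi=\beta$, not merely the size inequalities. Your very first sentence (``it suffices to verify the two bounds $\le$'') already concedes the wrong target, and although you note at the end that continuity and connectedness are ``the standard regularity one uses \ldots to upgrade the size bounds to exact level,'' you never actually carry out that upgrade. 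As written, the proposal proves $(\alpha,\beta)$-size, not $(\alpha,\beta)$-level.

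It is worth contrasting the two routes. For the type~II bound the paper does not use your $\epsilon$-sandwich against $\phi_{1-\beta-\epsilon}$; instead it first shows that $1-p$ is an unbiased statistic for $H_0$ (stochastically larger under $H_0$ than under $H_1$), and then uses connectedness of $\Theta$ together with continuity of $\theta\mapsto\P_\theta(p\le t)$ to prove a boundary-transfer identity of the form
\[
\sup_{\theta_0\in H_0}\P_{\theta_0}(T>t)=1-\sup_{\theta_1\in H_1}\P_{\theta_1}(T\le t),
\]
applied with $T=1-p$. This identity delivers equality in both error rates in one stroke, which is exactly what your sandwich cannot do: your method deliberately ``sidesteps these hypotheses,'' and in doing so it loses access to the matching lower bounds. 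Your route is genuinely more elementary for the $\le$ direction (it uses only nestedness, bijectivity, and unbiasedness of each $\phi_s$), but to finish the theorem as stated you must reinstate the connectedness/continuity argument to pin the suprema at a boundary point of $H_0$ and show they are attained with values $\alpha$ and $\beta$.
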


\begin{example}[{General Linear Hypothesis in 
Regression Analysis}]
 \label{ex:gen-regression}
 Consider the linear regression setting 
 (\cref{ex:regression}) and
 the general linear hypothesis
 \begin{align*}
  H_0: \mathbb{K} \Bbeta = \Bgamma_0
 \end{align*}
 where $\mathbb{K}$ is a $q \times d$ matrix
 and $\Bgamma_0 \in \mathbb{R}^q$.
 A particular case of this problem
 is the ANOVA test \citep{Neter1996}.
 There exists no UMPU test for $H_0$  \citep{Geisser2006}. 
 However, the F-statistic
 \begin{align*}
  F &= \frac{(\mathbb{K} \widehat{\Bbeta}-\Bgamma_0)^t (\mathbb{K} (\mathbb{X}^t \mathbb{X})^{-1}\mathbb{K}^t)^{-1} (\mathbb{K} \widehat{\Bbeta}-\Bgamma_0)q^{-1}}{ (\y- \mathbb{X}\Bbeta)^t(\y- \mathbb{X}\Bbeta)(n-p)^{-1}}
 \end{align*}
 is such that, for every $k \geq 0$,
 $\phi_{F,k,k}$ is unbiased for $H_0$
 \citep{Monahan2008}.
 Furthermore, it can be shown that
 $p_{H_0,\Phi}=F_{q,n-1}(F)$, where
 $F_{q,n-1}(\cdot)$ denotes the cumulative distribution function
 of a
 Snedecor's F-distribution
 random variable
 with $(q,n-1)$
 degrees of freedom.
 Since all conditions in 
 \cref{thm:p} are satisfied,
 $\phi_{1-F_{q,n-1}(F),\beta,1-\alpha}$
 is a $(\alpha,\beta)$-level test.
\end{example}

\begin{example}[{Permutation Test}]
 Let $\X=(X_1,\ldots,X_m)$ and
 $\Y=(Y_1,\ldots,Y_n)$ be i.i.d. samples from
 continuous distributions, $F_{X}$ and $F_{Y}$.
 Also, consider that $H_0:F_{X}=F_{Y}$ and 
 $\Theta=\{(F_{X},F_{Y}): F_{X} \mbox{ is stocastically larger than } F_{Y}\}$.
 Let $p_{H_0}(\X,\Y)$ be a p-value based on 
 a permutation test such that, if
 $\Y'=(Y'_1,\ldots,Y'_n)$ is such that,
 for every $i=1,\ldots,n$, $y'_i \geq y_i$,
 then $p_{H_0}(\X,\Y') \geq p_{H_0}(\X,\Y)$.
 It follows from \citet[Lemma 5.9.1]{Lehmann2006}
 that $p_{H_0}$ is unbiased for $H_0$.
 Also, under the topology induced by 
 the total variation metric,
 $\Theta$ is connected and
 $\P_{\theta}(p_{H_0} \leq t)$ is
 continuous over $\theta$.
 Conclude from \cref{thm:p} that
 $\phi_{1-p_{H_0},\beta,1-\alpha}$ is
 a $(\alpha,\beta)$-level agnostic test.
\end{example}

\section{Connections to region estimation}
\label{sec:region}

There exist several known equivalences between
standard tests and region estimators
\citep[p.241]{Bickel2015}.
For example, every region estimator is equivalent to
a collection of bilateral standard tests.
Also, standard tests for more general hypothesis can
be obtained as the indicator that the
hypothesis intercepts a region estimator.
These connections are useful for 
providing a method of obtaining and
interpreting standard hypothesis tests.

The following subsections show that 
similar results hold for 
the agnostic tests that
were obtained previously.
\Cref{sec:region-unilateral} presents
a general method for 
obtaining agnostic tests from
confidence regions.
Furthermore, it shows how this method
relates to logical coherence and to 
the unilateral tests in \cref{sec:power}.
\Cref{sec:region-bilateral} presents 
an equivalence equivalence between 
nested region estimators and
collections of bilateral agnostic tests.

\subsection{Agnostic tests based on a region estimator}
\label{sec:region-unilateral}

An agnostic test can
have other desirable properties
besides controlling both the
type I and type II errors.
For instance, \citet{Esteves2016,Esteves2017} 
show that agnostic tests can be 
made logically consistent.
That is, it is possible to test several
hypothesis using agnostic hypothesis tests
in such a way that it is impossible
to obtain  logical contradictions between 
their conclusions. 
This property generally cannot be 
obtained using standard tests \citep{Izbicki2015}.
Logically consistent agnostic tests are
connected to region estimators,
as summarized below.

\begin{definition}
 A region estimator is a function
 $R: \sX \rightarrow \sP(\Theta)$.
\end{definition}

\begin{definition}[Agnostic test based on a region estimator]
\label{def::region}
Let $R(x)$ be a region estimator and
 $H_0 \subseteq \Theta$.
 The agnostic test based on $R$ for
 testing $H_0$, $\phi_{H_0,R}$ is such that
 \begin{align*}
  \phi_{H_0,R}(x) &=
  \begin{cases}
   0 & \text{, if } R(x) \subseteq H_0 \\
   1 & \text{, if } R(x) \subseteq H_0^c \\
   \half & \text{, otherwise.} \\
  \end{cases}
 \end{align*}

 \Cref{fig::region} illustrates this procedure.
 
 \begin{figure}
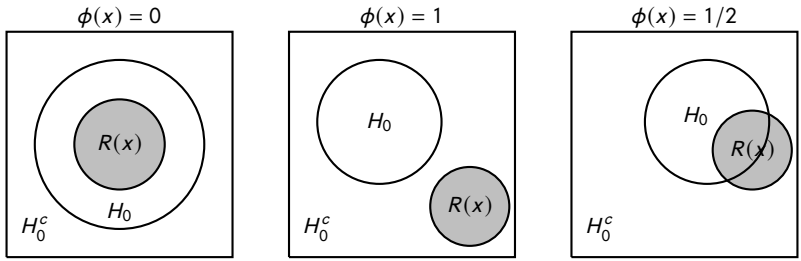

  \center
  \gfbstfig
  \mbox{} \vspace{-2mm} \mbox{} \\ 
  \caption{$\phi(x)$ is
  an agnostic test based on 
  the region estimator, $R(x)$, 
  for testing $H_0$.}
  \label{fig::region}
 \end{figure}
\end{definition}

\begin{definition}
 A collection of tests,
 $(\phi_{H_0})_{H_0 \in \sH}$
 is based on a region estimator if
 there exists a region estimator, $R(x)$,
 such that, for every $H_0 \in \sH$,
 $\phi_{H_0}$ is based on $R$.
\end{definition}

\begin{theorem}[\citet{Esteves2016}]
 \label{thm:region-logical}
 Let $(\phi_{H_0})_{H_0 \in \sigma(\Theta)}$
 be a collection of agnostic tests such that
 $\sigma(\Theta)$ is a $\sigma$-field over $\Theta$
 and, for every $\theta \in \Theta$,
 $\{\theta\} \in \sigma(\Theta)$.
 $(\phi_{H_0})_{H_0 \in \sigma(\Theta)}$ is
 logically consistent if and only if
 it is based on a region estimator.
\end{theorem}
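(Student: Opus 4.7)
The plan is to prove the biconditional in two steps. The easier direction is that any collection derived from a region estimator automatically satisfies the axioms of logical consistency. Starting from \Cref{def::region}, I would verify directly that the assignments $\phi_{H_0,R}(x) = 0 \Leftrightarrow R(x) \subseteq H_0$ and $\phi_{H_0,R}(x) = 1 \Leftrightarrow R(x) \subseteq H_0^c$ are monotone in $H_0$ (forward inclusion for acceptance, reverse inclusion for rejection), swap under complementation of $H_0$, and behave consistently under arbitrary intersections (acceptance) and unions (rejection). Each of these is an elementary set-theoretic identity about the sets $\{x : R(x) \subseteq H_0\}$ and $\{x : R(x) \cap H_0 = \emptyset\}$, and together they amount exactly to the conditions that define a logically consistent collection of agnostic tests in the sense of \citet{Esteves2016}.

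For the converse, the main task is to reconstruct a region estimator from the tests. I would set
\[R(x) := \{\theta \in \Theta : \phi_{\{\theta\}^c}(x) \neq 1\},\]
which is well defined because the hypothesis $\{\theta\} \in \sigma(\Theta)$ implies $\{\theta\}^c \in \sigma(\Theta)$ for every $\theta$. Equivalently, using the complementation axiom of logical consistency, $R(x)$ can be written as $\bigcap\{H_0 \in \sigma(\Theta) : \phi_{H_0}(x) = 0\}$. The main step is then to check that $\phi_{H_0,R}(x) = \phi_{H_0}(x)$ for every $H_0 \in \sigma(\Theta)$. When $\phi_{H_0}(x) = 0$, monotonicity of the collection together with the intersection axiom give $R(x) \subseteq H_0$, so $\phi_{H_0,R}(x) = 0$. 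When $\phi_{H_0}(x) = 1$, applying the complementation axiom reduces to the previous case and yields $R(x) \subseteq H_0^c$, so $\phi_{H_0,R}(x) = 1$. The agnostic case then follows by exclusion.

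The main obstacle is handling this converse direction at the level of the full $\sigma$-field $\sigma(\Theta)$, rather than only for singletons or finite intersections. Rewriting $R(x)$ as $\bigcap\{H_0 : \phi_{H_0}(x) = 0\}$ requires that the intersection-consonance axiom of logical consistency propagate to arbitrary, possibly uncountable subfamilies, so that $R(x)$ is itself the smallest accepted hypothesis at $x$. The assumption that every singleton lies in $\sigma(\Theta)$ is exactly what makes the pointwise reconstruction go through: each $\theta$ can be probed individually through $\phi_{\{\theta\}^c}$, and the resulting $R(x)$ then inherits enough structure to reproduce the behavior of $\phi_{H_0}$ on arbitrary measurable $H_0$.
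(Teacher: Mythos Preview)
The paper does not contain a proof of this theorem: it is stated with an attribution to \citet{Esteves2016}, and the Demonstrations section proves every other labeled result but skips \Cref{thm:region-logical} entirely. So there is nothing in the present paper to compare your proposal against.

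That said, your outline is the standard route taken in \citet{Esteves2016}. The ``if'' direction is exactly the routine verification you describe. For the ``only if'' direction, the reconstruction $R(x)=\{\theta:\phi_{\{\theta\}}(x)\neq 1\}$ (equivalently $\{\theta:\phi_{\{\theta\}^c}(x)\neq 0\}$, using invertibility) is the same one used there, and the singleton hypothesis assumption is invoked for precisely the reason you identify. Your flagged obstacle is also the genuine one: the consonance axiom in \citet{Esteves2016} is stated for \emph{arbitrary} families $(H_i)_{i\in I}$ with $\bigcap_i H_i\in\sigma(\Theta)$, not merely countable ones, so the step ``$R(x)$ is the smallest accepted hypothesis'' goes through without the cardinality issue you worried about. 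If you were working only with a countable-intersection version of consonance, the argument as written would indeed have a gap; under the actual definition it does not.
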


It follows from \cref{thm:region-logical} that
the collection of tests based on 
a region estimator is logically consistent.
\cref{thm:region-control} shows that,
if this region estimator has
confidence $1-\alpha$, then
the tests based on it also control both
the type I and II errors by $\alpha$.

\begin{theorem}
 \label{thm:region-control}
 If $R(x)$ is a region estimator for
 $\theta$ with confidence $1-\alpha$ and
 $\phi_{H_0,R}$ is an agnostic test for $H_0$
 based on $R$, then
 $\phi_{H_0,R}$ is a 
 $(\alpha,\alpha)$-size test.
\end{theorem}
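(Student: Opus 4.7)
The plan is to directly bound each of the two error probabilities by exploiting the coverage property of $R$. The key observation is that each type of error forces the true parameter to fall outside the region estimator, so both errors are dominated by the non-coverage probability, which is at most $\alpha$ by hypothesis.

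More concretely, I would first handle the type I error. Fix any $\theta_0 \in H_0$. On the event $\{\phi_{H_0,R}(X)=1\}$, by \Cref{def::region} we have $R(X) \subseteq H_0^c$, and since $\theta_0 \in H_0$ this forces $\theta_0 \notin R(X)$. Therefore
\begin{align*}
 \P_{\theta_0}(\phi_{H_0,R}=1) \;\leq\; \P_{\theta_0}(\theta_0 \notin R(X)) \;\leq\; \alpha,
\end{align*}
where the last inequality uses the assumption that $R$ has confidence $1-\alpha$. Taking the supremum over $\theta_0 \in H_0$ yields $\alpha_{\phi_{H_0,R}} \leq \alpha$.

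Next I would run the symmetric argument for the type II error. Fix any $\theta_1 \in H_1 = H_0^c$. On the event $\{\phi_{H_0,R}(X)=0\}$ we have $R(X) \subseteq H_0$, so $\theta_1 \notin R(X)$. Hence
\begin{align*}
 \P_{\theta_1}(\phi_{H_0,R}=0) \;\leq\; \P_{\theta_1}(\theta_1 \notin R(X)) \;\leq\; \alpha,
\end{align*}
and taking the supremum gives $\beta_{\phi_{H_0,R}} \leq \alpha$. Combining the two bounds shows that $\phi_{H_0,R}$ has size $(\alpha,\alpha)$.

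There is no real obstacle here: the argument is essentially a set-theoretic unpacking of \Cref{def::region} together with the definition of a confidence region. The only mild subtlety is making sure that the implication ``$R(X) \subseteq H_0^c$ and $\theta_0 \in H_0$ imply $\theta_0 \notin R(X)$'' (and its analogue for type II) is stated cleanly; this is the step that makes the coverage bound applicable. No measurability or continuity assumptions are needed beyond whatever is implicit in $R$ being a bona fide region estimator.
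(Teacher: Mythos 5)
Your proof is correct and follows essentially the same route as the paper's: both arguments note that a rejection (resp.\ acceptance) forces $R(X)\subseteq H_0^c$ (resp.\ $R(X)\subseteq H_0$), hence non-coverage of the true parameter, and then bound each error probability by the non-coverage probability $\leq\alpha$ before taking suprema. No gaps; nothing further is needed.
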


Furthermore, the unilateral tests that
were developed in \Cref{sec:power,sec:pvalue} 
are based on confidence regions.
In order to present such regions, 
\cref{thm:region-unilateral} uses
\cref{assumption:region-unilateral-1,assumption:region-unilateral-2}.

\begin{assumption}
 \label{assumption:region-unilateral-1}
Let $H_{0,\theta^*}: \theta(1) \leq \theta^*$. $\left(\phi_{H_{0,\theta^*}}\right)_{\theta^* \in \Re}$ is a collection of
 agnostic tests such that
 \begin{enumerate}[label=(\alph*)]
  \item If $\to \leq \theta_2$ and
  $\phi_{H_{0,\to}}(x)=0$, then
  $\phi_{H_{0,\theta_2}}(x)=0$
  \item If $\to \leq \theta_2$ and
  $\phi_{H_{0,\theta_2}}(x)=1$, then
  $\phi_{H_{0,\to}}(x)=1$.
 \end{enumerate}
\end{assumption}

\begin{assumption}
 \label{assumption:region-unilateral-2}
 Let $H_{0,\theta^*}: \theta(1) \leq \theta^*$.
 $\left(\phi_{H_{0,\theta^*}}\right)_{\theta^* \in \Re}$ is a collection of
 agnostic tests such that
 for every $\theta \in \Theta$
 such that $\theta(1)=\theta^*$, 
 $\P_{\theta}\left(\phi_{H_{0,\theta^*}}=\half\right)\geq 1-2\alpha$
\end{assumption}

\Cref{assumption:region-unilateral-1} requires that a
collection of unilateral tests satisfy a weak form of
logical coherence. That is, if 
$\to \leq \theta_2$ and 
the collection of tests
accepts that $\theta \leq \to$, then 
it accepts that $\theta \leq \theta_2$.
Similarly, if $\to \leq \theta_2$ and
the collection of tests rejects 
that
$\theta \leq \theta_2$, then 
it also rejects that $\theta \leq \to$.
\Cref{assumption:region-unilateral-2} requires that,
for every test in the collection,
the probability of the no-decision alternative
in the border point
of $H_0$ is at least $1-2\alpha$.
\Cref{thm:region-unilateral} shows that 
a collection of unilateral tests that satisfy
\cref{assumption:region-unilateral-1,assumption:region-unilateral-2} is based on 
a confidence region of confidence $1-2\alpha$.

\begin{theorem}
 \label{thm:region-unilateral}
 For each, $\theta^*$, let
 $H_{0,\theta^*}: \theta(1) \leq \theta^*$.
 If $\left(\phi_{H_{0,\theta^*}}\right)_{\theta^* \in \Re}$ satisfies 
 \cref{assumption:region-unilateral-1}, then
 there exists a region estimator,
 $R(x)$, such that, for every $\theta^*$,
 $\phi_{H_{0,\theta^*}}$ is based on $R(x)$.
 Furthermore, if
 \cref{assumption:region-unilateral-2} holds,
 then $R(x)$ is a confidence region for $\theta$
 with confidence $1-2\alpha$.
\end{theorem}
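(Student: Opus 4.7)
The plan is to construct a region estimator $R(x)$ directly from the collection of tests, verify that each $\phi_{H_{0,\theta^*}}$ is based on $R$, and then leverage \Cref{assumption:region-unilateral-2} to obtain the confidence bound. For each $x \in \sX$, define $A(x) = \{\theta^* \in \Re : \phi_{H_{0,\theta^*}}(x) = 0\}$ and $B(x) = \{\theta^* \in \Re : \phi_{H_{0,\theta^*}}(x) = 1\}$. \Cref{assumption:region-unilateral-1}(a) makes $A(x)$ upward-closed, (b) makes $B(x)$ downward-closed, and they are disjoint by definition. Writing $U(x) = \inf A(x)$ and $L(x) = \sup B(x)$ (with $\inf \emptyset = +\infty$, $\sup \emptyset = -\infty$), one verifies $L(x) \leq U(x)$ by contradiction: if $L(x) > U(x)$, pick $\theta_1^* \in A(x)$ with $\theta_1^* < L(x)$ and then $\theta_2^* \in B(x)$ with $\theta_2^* > \theta_1^*$; \Cref{assumption:region-unilateral-1}(a) applied to $\theta_1^* \leq \theta_2^*$ forces $\theta_2^* \in A(x)$, contradicting $A(x) \cap B(x) = \emptyset$.

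Next, define $R(x) = \bigcap_{\theta^* \in A(x)} H_{0,\theta^*} \;\cap\; \bigcap_{\theta^* \in B(x)} H_{0,\theta^*}^c$, the largest subset of $\Theta$ consistent with every test's verdict. Immediately, $\theta^* \in A(x)$ implies $R(x) \subseteq H_{0,\theta^*}$, and $\theta^* \in B(x)$ implies $R(x) \subseteq H_{0,\theta^*}^c$. For the converse implications, needed to conclude $\phi_{H_{0,\theta^*}} = \phi_{H_{0,\theta^*}, R}$, one shows that $\sup_{\theta \in R(x)} \theta(1) = U(x)$ and $\inf_{\theta \in R(x)} \theta(1) = L(x)$, and matches the attained/unattained status of these extremes to whether $U(x) \in A(x)$ and $L(x) \in B(x)$.

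For the confidence claim, observe the key implication: if $\phi_{H_{0,\theta(1)}}(X) = \half$ then $\theta(1) \notin A(X) \cup B(X)$, and a direct check against the construction of $R(X)$ above yields $\theta \in R(X)$. Hence
\[
 \P_{\theta}\bigl(\theta \in R(X)\bigr) \;\geq\; \P_{\theta}\!\left(\phi_{H_{0,\theta(1)}}(X) = \half\right) \;\geq\; 1 - 2\alpha,
\]
where the last inequality is \Cref{assumption:region-unilateral-2} applied with $\theta^* = \theta(1)$. Taking the infimum over $\theta \in \Theta$ gives confidence $1-2\alpha$.

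The main technical obstacle is the bookkeeping at the endpoints $L(x)$ and $U(x)$ in the converse direction of the verification step: depending on whether these points belong to $A(x)$, $B(x)$, or neither, the equivalence $R(x) \subseteq H_{0,\theta^*} \Leftrightarrow \theta^* \in A(x)$ must hold exactly, which requires a careful four-case analysis of whether the corresponding extreme of $\theta(1)$ over $R(x)$ is attained. This is tedious but routine, not a conceptual difficulty.
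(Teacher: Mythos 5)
Your construction is, in substance, the paper's own: the paper takes $R(x)$ to be (the cylinder over) the agnostic set $\{\theta^*:\phi_{H_{0,\theta^*}}(x)=\half\}$, while your $R(x)$ is the slightly larger ``interval hull'' $\bigcap_{\theta^*\in A(x)}H_{0,\theta^*}\cap\bigcap_{\theta^*\in B(x)}H_{0,\theta^*}^c$; the two differ at most at endpoints, and your coverage argument (agnostic at $\theta(1)$ implies $\theta\in R(X)$, then \cref{assumption:region-unilateral-2}) is the same as the paper's, except that the paper gets equality where you only need the inequality. So the strategy is fine.

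The genuine weak point is exactly the step you defer as ``tedious but routine'': the converse implications $R(x)\subseteq H_{0,\theta^*}\Rightarrow\phi_{H_{0,\theta^*}}(x)=0$ and $R(x)\subseteq H_{0,\theta^*}^c\Rightarrow\phi_{H_{0,\theta^*}}(x)=1$. Under \cref{assumption:region-unilateral-1} alone this equivalence can \emph{fail}, so no four-case bookkeeping will make it ``hold exactly.'' Concretely, take $A(x)=(u,\infty)$ (acceptance set open, upward-closed) and $B(x)=(-\infty,l)$ with $l\le u$, so the test is agnostic on $[l,u]$; this configuration is allowed by \cref{assumption:region-unilateral-1}. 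Your $R(x)$ then has first coordinate $[l,u]$, hence $R(x)\subseteq H_{0,u}$ and the region-based test accepts at $\theta^*=u$, while $\phi_{H_{0,u}}(x)=\half$. Worse, no choice of region fixes this: for any region estimator the set $\{\theta^*: R(x)\subseteq H_{0,\theta^*}\}$ is a closed upper half-line (or $\emptyset$ or $\Re$), so an open acceptance set can never be reproduced. The paper's proof elides the same boundary issue (it proves ``$\phi=0$ iff $\theta^*>R_1(x)$'' with strict inequalities and then silently identifies this with $R(x)\subseteq H_{0,\theta^*}$), and in the corollaries the issue disappears because continuity of $\P_\theta(T\le t)$ makes the acceptance/rejection sets closed/open in the right way. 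So you are no worse off than the paper, but your write-up should either add such a regularity condition (closedness of $A(x)$ and the matching attainment condition for $B(x)$) or acknowledge that the ``based on $R$'' identity holds only up to these boundary cases, rather than asserting the exact equivalence is routine.
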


It is possible to use \cref{thm:region-control,thm:region-unilateral} in order 
to extend a collection of unilateral tests to 
a larger collection of tests. If 
the collection of unilateral tests satisfies
\cref{assumption:region-unilateral-1,assumption:region-unilateral-2}, then it follows from 
\cref{thm:region-unilateral} that 
these tests are based on a region estimator,
$R(X)$, with confidence $1-2\alpha$.
Therefore, it follows from 
\cref{thm:region-control} that,
for every $H_0$ of the type
$\theta(1) \in \Theta_0 \subseteq \Re$,
the test for $H_0$ based on $R(X)$
has size $(2\alpha,2\alpha)$. Furthermore,
it follows from \cref{thm:region-logical} that
the collection of these tests is
logically coherent.
\cref{cor-izbicki} summarizes 
these conclusions.

\begin{corollary}
 \label{cor-izbicki}
 For each, $\theta^*$, let
 $H_0: \theta(1) \leq \theta^*$.
 Also, assume that 
 $\left(\phi_{H_{0,\theta^*}}\right)_{\theta^* \in \Re}$ satisfies \cref{assumption:region-unilateral-1,assumption:region-unilateral-2}.
 Let $R(X)$ be such as in 
 \cref{thm:region-unilateral}.
 Consider the collection of agnostic tests $(\phi_{H_{0,\Tz},R})_{\Tz \subset \Re}$,
 where $H_{0,\Tz}: \theta(1) \in \Tz$ (recall \Cref{def::region}). 
 Then
 \begin{enumerate}[label=(\roman*)]
\item this collection
 is
 logically coherent,
\item each test is this collection   has size
 $(2\alpha,2\alpha)$, and
\item this collection is an extension of the collection $\left(\phi_{H_{0,\theta^*}}\right)_{\theta^* \in \Re}$
\end{enumerate}
\end{corollary}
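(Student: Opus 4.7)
The plan is to chain together the three theorems that precede the corollary without introducing any new constructions, since this corollary is explicitly advertised in the surrounding prose as a packaging of \cref{thm:region-logical}, \cref{thm:region-control}, and \cref{thm:region-unilateral}. First I would invoke \cref{thm:region-unilateral} on the hypothesized family $\left(\phi_{H_{0,\theta^*}}\right)_{\theta^* \in \Re}$, which under \cref{assumption:region-unilateral-1} produces a region estimator $R(x)$ such that $\phi_{H_{0,\theta^*}} = \phi_{H_{0,\theta^*},R}$ for every $\theta^* \in \Re$, and under \cref{assumption:region-unilateral-2} upgrades $R$ to a confidence region of confidence $1-2\alpha$. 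After this single invocation the three conclusions become independent and almost immediate.

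For part (i), the enlarged family $(\phi_{H_{0,\Tz},R})_{\Tz \subset \Re}$ is by construction a collection of tests all based on the same region estimator $R$, so I would apply the ``if'' direction of \cref{thm:region-logical} to conclude logical coherence. For part (ii), since $R$ has confidence $1-2\alpha$, I would apply \cref{thm:region-control} (with $\alpha$ there set to $2\alpha$ here) to each member $\phi_{H_{0,\Tz},R}$ of the enlarged family, yielding size $(2\alpha,2\alpha)$ uniformly in $\Tz$.

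For part (iii), the extension claim, I would specialize the index $\Tz$ to the half-line $(-\infty,\theta^*]$ for each $\theta^* \in \Re$. Then $H_{0,\Tz}$ coincides as a subset of $\Theta$ with $H_{0,\theta^*}$, so $\phi_{H_{0,\Tz},R} = \phi_{H_{0,\theta^*},R}$ by \cref{def::region}, and this equals the original $\phi_{H_{0,\theta^*}}$ by the defining conclusion of \cref{thm:region-unilateral}. Hence every test of the original unilateral family appears verbatim in the enlarged family.

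There is essentially no analytic obstacle in this argument; the only point that merits care is verifying that the pointwise equality $\phi_{H_{0,\theta^*}} = \phi_{H_{0,\theta^*},R}$ delivered by \cref{thm:region-unilateral} is in fact the exact condition ``based on $R$'' used in \cref{def::region}, so that the specialization in part (iii) recovers the original test rather than merely agreeing with it on the rejection set. Once that identification is made, the corollary follows as a direct transcription of the three earlier theorems.
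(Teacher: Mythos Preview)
Your proposal is correct and matches the paper's own proof, which is the one-line statement that the corollary follows directly from \cref{thm:region-logical}, \cref{thm:region-unilateral}, and \cref{thm:region-control}. Your unpacking of part (iii) via the specialization $\Tz=(-\infty,\theta^*]$ is exactly the content implicit in the paper's appeal to \cref{thm:region-unilateral}.
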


Under weak conditions, the
tests that were developed in
\cref{thm:k-r,thm:umpu-1} satisfy
\cref{assumption:region-unilateral-1,assumption:region-unilateral-2}.
As a result, they can be used in
\cref{thm:region-unilateral,cor-izbicki}.
These results are presented in
\cref{cor:ump-region,cor:umpu-region} and
illustrated in
\cref{ex:z-test-region,ex:t-test-region}.

\begin{corollary}
 \label{cor:ump-region}
Consider the setting of \cref{thm:k-r}, and let $H_{0,\theta^*}: \theta \leq \theta^*$.
 The collection $\phi_{H_{0,\theta^*}}$ 
 of UMP $(\alpha,\alpha)$-level test presented 
 in \cref{thm:k-r} is based on 
 a region estimator, $R(X)$.
 Furthermore, if $T$ is such that 
 $\P_\theta(T \leq t)$ is
 continuous over $\theta$, then
 $R(X)$ has confidence $1-2\alpha$ for $\theta$.
\end{corollary}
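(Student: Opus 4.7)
The plan is to apply \cref{thm:region-unilateral} to the family $\left(\phi_{H_{0,\theta^*}}\right)_{\theta^* \in \Re}$ of UMP $(\alpha,\alpha)$-level tests supplied by \cref{thm:k-r}. Each such test coincides with $\phi_{T,c_0(\theta^*),c_1(\theta^*)}$, where the critical values are pinned down by the level equations $\sup_{\theta > \theta^*}\P_\theta(T \leq c_0(\theta^*)) = \alpha$ and $\sup_{\theta \leq \theta^*}\P_\theta(T > c_1(\theta^*)) = \alpha$. Because the likelihood is monotone in $T$, the map $\theta \mapsto \P_\theta(T > c)$ is non-decreasing for every $c$; equivalently, $T$ is stochastically non-decreasing in $\theta$. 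This is the only structural fact I will use beyond \cref{thm:k-r}.

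First I would verify \cref{assumption:region-unilateral-1}. For $\theta_1 \leq \theta_2$, stochastic monotonicity gives $\P_{\theta_1}(T \leq t) \geq \P_{\theta_2}(T \leq t)$ for every $t$. Combined with the defining relations for $c_0$ and $c_1$, this forces $c_0(\theta_1) \leq c_0(\theta_2)$ and $c_1(\theta_1) \leq c_1(\theta_2)$: otherwise the prescribed tail probabilities of $\alpha$ could not match at both $\theta_1^*$ and $\theta_2^*$. Part (a) of the assumption now follows because $T(x) \leq c_0(\theta_1)$ implies $T(x) \leq c_0(\theta_2)$, and part (b) because $T(x) > c_1(\theta_2)$ implies $T(x) > c_1(\theta_1)$. \Cref{thm:region-unilateral} then yields a region estimator $R(X)$ on which the entire collection is based, establishing the first claim.

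For the confidence claim, I would invoke the additional continuity hypothesis. Continuity of $\P_\theta(T \leq t)$ in $\theta$, together with the level equations, upgrades the suprema to exact probabilities at the boundary: $\P_{\theta^*}(T \leq c_0(\theta^*)) = \alpha$ and $\P_{\theta^*}(T > c_1(\theta^*)) = \alpha$. Consequently
\begin{align*}
 \P_{\theta^*}\bigl(\phi_{H_{0,\theta^*}} = \tfrac{1}{2}\bigr)
 &= \P_{\theta^*}\bigl(c_0(\theta^*) < T \leq c_1(\theta^*)\bigr) \\
 &= 1 - 2\alpha,
\end{align*}
which verifies \cref{assumption:region-unilateral-2} (with equality). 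The second part of \cref{thm:region-unilateral} then gives that $R(X)$ is a confidence region for $\theta$ with confidence $1-2\alpha$.

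The main subtlety is the monotonicity of $c_0(\theta^*)$ and $c_1(\theta^*)$ in $\theta^*$: once stochastic ordering is in hand this reduces to a quantile comparison, but in the first step one must argue it without leaning on continuity, and in the confidence step one must convert the supremum over the open set $\{\theta > \theta^*\}$ into the value at $\theta^*$ via continuity. Both reductions are clean, so the argument is essentially a verification of the hypotheses of \cref{thm:region-unilateral} built on the monotone likelihood structure.
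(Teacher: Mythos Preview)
Your proposal is correct and follows essentially the same route as the paper: both verify \cref{assumption:region-unilateral-1} by showing that the critical values $c_0(\theta^*)$ and $c_1(\theta^*)$ are non-decreasing in $\theta^*$ (the paper phrases this via the inclusion $H_{0,\theta_1}\subset H_{0,\theta_2}$, you via stochastic monotonicity of $T$), then verify \cref{assumption:region-unilateral-2} using the continuity hypothesis to get $\P_{\theta^*}(\phi_{H_{0,\theta^*}}=\tfrac12)=1-2\alpha$, and finally invoke \cref{thm:region-unilateral}. Your write-up is slightly more explicit about why the monotonicity of the critical values holds, but the argument is the same.
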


\begin{example}[Agnostic z-test]
 \label{ex:z-test-region} Consider again \cref{ex:z-test}.
 For each $\mu^* \in \Re$,
 let $H_{0,\mu^*}: \mu \leq \mu^*$.
 Let $\alpha \leq 0.5$ and
 $\left(\phi_{H_{0,\mu^*}}\right)_{\mu^* \in \Re}$ be the collection of
 UMP $(\alpha,\alpha)$-level tests in 
 \cref{ex:z-test}. By defining the constants
 $a_1=\sigma n^{-0.5}\Phi^{-1}(1-\alpha)$ and 
 $a_2=\sigma n^{-0.5}\Phi^{-1}(\alpha)$,
 note that
 $\phi_{H_{0,\mu^*}}=\phi_{\bar{X},\mu^*-a_1,\mu^*-a_2}$. It follows that
 $\left(\phi_{H_{0,\mu^*}}\right)_{\mu^* \in \Re}$ is based on the region estimator
 $R(X)=[\bar{X}-a_1,\bar{X}-a_2]$, which
 is a $1-2\alpha$ confidence interval for $\mu$.
\end{example}

\begin{assumption}
 \label{assumption:umpu-region}
 For each $\theta^* \in \Re$, let
 $V_{\theta^*}$ be such as in
 \cref{assumption:umpu-1} when
 $\theta(1)=\theta^*$.
 There exists a function,
 $g(v,\theta)$, which is
 decreasing over $\theta$ and
 such that $g(V_{\theta},\theta)$
 is ancillary.
\end{assumption}

\begin{corollary}
 \label{cor:umpu-region}
 For each $\theta^* \in \mathbb{R}$,
 let $H_{0,\theta^*}: \theta(1) \leq \theta^*$. 
 Under \cref{assumption:umpu-1} and
 $\alpha \leq 0.5$, let
 $\phi_{H_{0,\theta^*}}$ be the
 UMPU $(\alpha,\alpha)$-level test presented 
 in \cref{thm:umpu-1}. 
 Under \cref{assumption:umpu-region},
 the collection
 $\left(\phi_{H_{0,\theta^*}}\right)_{\theta^* \in \Theta}$ is based on a region estimator,
 $R(X)$, which has 
 confidence $1-2\alpha$ for $\theta$.
\end{corollary}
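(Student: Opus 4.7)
The plan is to reduce the claim to \cref{thm:region-unilateral}: if I can verify that the collection of UMPU unilateral tests $\left(\phi_{H_{0,\theta^*}}\right)_{\theta^* \in \Re}$ satisfies \cref{assumption:region-unilateral-1} and \cref{assumption:region-unilateral-2} with $\beta = \alpha$, then that theorem immediately produces the region estimator $R(X)$ of confidence $1 - 2\alpha$ on which the whole collection is based, which is exactly the statement to be proved.

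The main tool is the pivot supplied by \cref{assumption:umpu-region}. Let $W_\theta := g(V_\theta, \theta)$ and let $F_W$ denote its (common) distribution under any true parameter, which by ancillarity is well defined. The cutoffs $c_0^{\theta^*}, c_1^{\theta^*}$ from \cref{thm:umpu-1} are defined by $\P_{\bar\theta}(V_{\theta^*} \leq c_0^{\theta^*}) = \alpha$ and $\P_{\bar\theta}(V_{\theta^*} > c_1^{\theta^*}) = \alpha$ for any $\bar\theta$ with $\bar\theta(1) = \theta^*$; after passing through $g$ they translate to fixed $F_W$-quantiles $d_0 = F_W^{-1}(\alpha)$ and $d_1 = F_W^{-1}(1-\alpha)$ independent of $\theta^*$. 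This yields the pivotal representation $\phi_{H_{0,\theta^*}}(x) = 0 \iff W_{\theta^*}(x) \leq d_0$ and $\phi_{H_{0,\theta^*}}(x) = 1 \iff W_{\theta^*}(x) > d_1$.

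Since $g(v,\theta)$ is decreasing in $\theta$, and $V_{\theta^*}$ inherits monotone dependence on $\theta^*$ from the UMPU construction, the map $\theta^* \mapsto W_{\theta^*}(x)$ is nonincreasing for each fixed $x$. Consequently the acceptance set $\{\theta^* : W_{\theta^*}(x) \leq d_0\}$ is an upper interval $[u(x), \infty)$ and the rejection set $\{\theta^* : W_{\theta^*}(x) > d_1\}$ is a lower interval $(-\infty, \ell(x))$, which is exactly what \cref{assumption:region-unilateral-1}(a)--(b) requires. For \cref{assumption:region-unilateral-2}, the exponential-family structure of \cref{assumption:umpu-1} guarantees that the UMPU test is similar on the boundary, so for every $\theta$ with $\theta(1) = \theta^*$ one has $\P_\theta(\phi_{H_{0,\theta^*}} = 1) = \P_\theta(\phi_{H_{0,\theta^*}} = 0) = \alpha$, and thus $\P_\theta(\phi_{H_{0,\theta^*}} = \half) = 1 - 2\alpha$, meeting the bound with equality.

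The main obstacle is the pivotal reduction in the second paragraph: one must argue carefully that the cutoffs $c_0^{\theta^*}$ and $c_1^{\theta^*}$, which a priori depend on $\theta^*$ through the distribution of $V_{\theta^*}$ under $\bar\theta$, genuinely collapse to the $\theta^*$-free quantiles $d_0, d_1$ after the change of variable through $g$, and that the composite map $\theta^* \mapsto g(V_{\theta^*}(x), \theta^*)$ is monotone in $\theta^*$ for each fixed sample. Both rely on a form of monotonicity in the first argument of $g$ that is typical of any pivotal quantity but is not spelled out explicitly in \cref{assumption:umpu-region}; making this implicit property precise and linking it to the UMPU construction is the delicate part of the argument.
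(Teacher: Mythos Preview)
Your proposal follows the same route as the paper's proof: verify \cref{assumption:region-unilateral-1,assumption:region-unilateral-2} via the ancillary pivot $g(V_{\theta^*},\theta^*)$ and then invoke \cref{thm:region-unilateral}. The only cosmetic difference is that the paper inverts $g$ and writes the UMPU test as $\phi_{V_{\theta^*},\,g^{-1}(v_\alpha,\theta^*),\,g^{-1}(v_{1-\alpha},\theta^*)}$, reading off \cref{assumption:region-unilateral-1} from the fact that $\theta\mapsto g^{-1}(v,\theta)$ is increasing (and obtaining \cref{assumption:region-unilateral-2} directly from \cref{thm:umpu-1}), whereas you keep the pivot $W_{\theta^*}$ against fixed thresholds $d_0,d_1$; the concern you flag about implicit monotonicity of $g$ in its first argument is well taken and is used silently in the paper's inversion step as well.
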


\begin{example}[Agnostic t-test]
 \label{ex:t-test-region}
 Consider again \cref{ex:t-test}.
 For each $\mu^* \in \Re$,
 let $H_{0,\mu^*}: \mu \leq \mu^*$.
 Let $\alpha \leq 0.5$ and
 $\left(\phi_{H_{0,\mu^*}}\right)_{\mu^* \in \Re}$ 
 be the collection of
 UMP $(\alpha,\alpha)$-level tests in 
 \cref{ex:t-test}. By defining
 $S=\sqrt{(n-1)^{-1}\sum_{i=1}^{n}{(X_i-\bar{X})^2}}$,
 $a_1=n^{-0.5} S t_{n-1}^{-1}(1-\alpha)$ and 
 $a_2=n^{-0.5} S t_{n-1}^{-1}(\alpha)$,
 note that
 $\phi_{H_{0,\mu^*}}=\phi_{\bar{X},\mu^*-a_1,\mu^*-a_2}$. It follows that
 $\left(\phi_{H_{0,\mu^*}}\right)_{\mu^* \in \Theta}$ is based on the region estimator
 $R(X)=[\bar{X}-a_1,\bar{X}-a_2]$, which
 is a $1-2\alpha$ confidence interval for $\mu$.
\end{example}

\subsection{Agnostic tests based on nested region estimators}
\label{sec:region-bilateral}

Contrary to the unilateral tests,
the bilateral tests in \cref{sec:power} are
not based on region estimators.
Indeed, while these bilateral tests can 
accept a precise hypothesis,
this feature cannot be obtained in
tests based on region estimators.
However, similarly to the case for standard tests,
there exists an equivalence between
collections of bilateral agnostic tests and
pairs of nested region estimators.
Indeed, it is possible to obtain from one another 
a nested pair of $1-\alpha$ and $\beta$ confidence regions
and a collection of bilateral $(\alpha,\beta)$-size tests.
\Cref{defn:region-aprox} prepares for this equivalence,
which is established in \cref{thm:region-bilateral}.

\begin{definition}[Agnostic test based on 
nested region estimators]
 \label{defn:region-aprox}
 Let $R_1(x)$ and $R_2(x)$ be
 region estimators such that,
 $R_1(x) \subseteq R_2(x)$ and
 $H_0 \subseteq \Theta$.
 The agnostic test based on $R_1$ and $R_2$ for
 testing $H_0$, $\phi_{H_0,R_1,R_2}$, is
 \begin{align*}
  \phi_{H_0,R_1,R_2}(x) &=
  \begin{cases}
   0 & \text{, if } H_0 \subseteq R_1 \\
   1 & \text{, if } R_2 \subseteq H_0^c \\
   \half & \text{, otherwise.} \\
  \end{cases}
 \end{align*}
 \Cref{fig:region-aprox} illustrates
 $\phi_{H_0,R_1,R_2}$ when
 $H_0: \theta = \tz$.
\end{definition}

\begin{figure}
 \centering
 \includegraphics[width=0.7\textwidth,trim={1.2cm 2.4cm 1.1cm 2.2cm},clip]{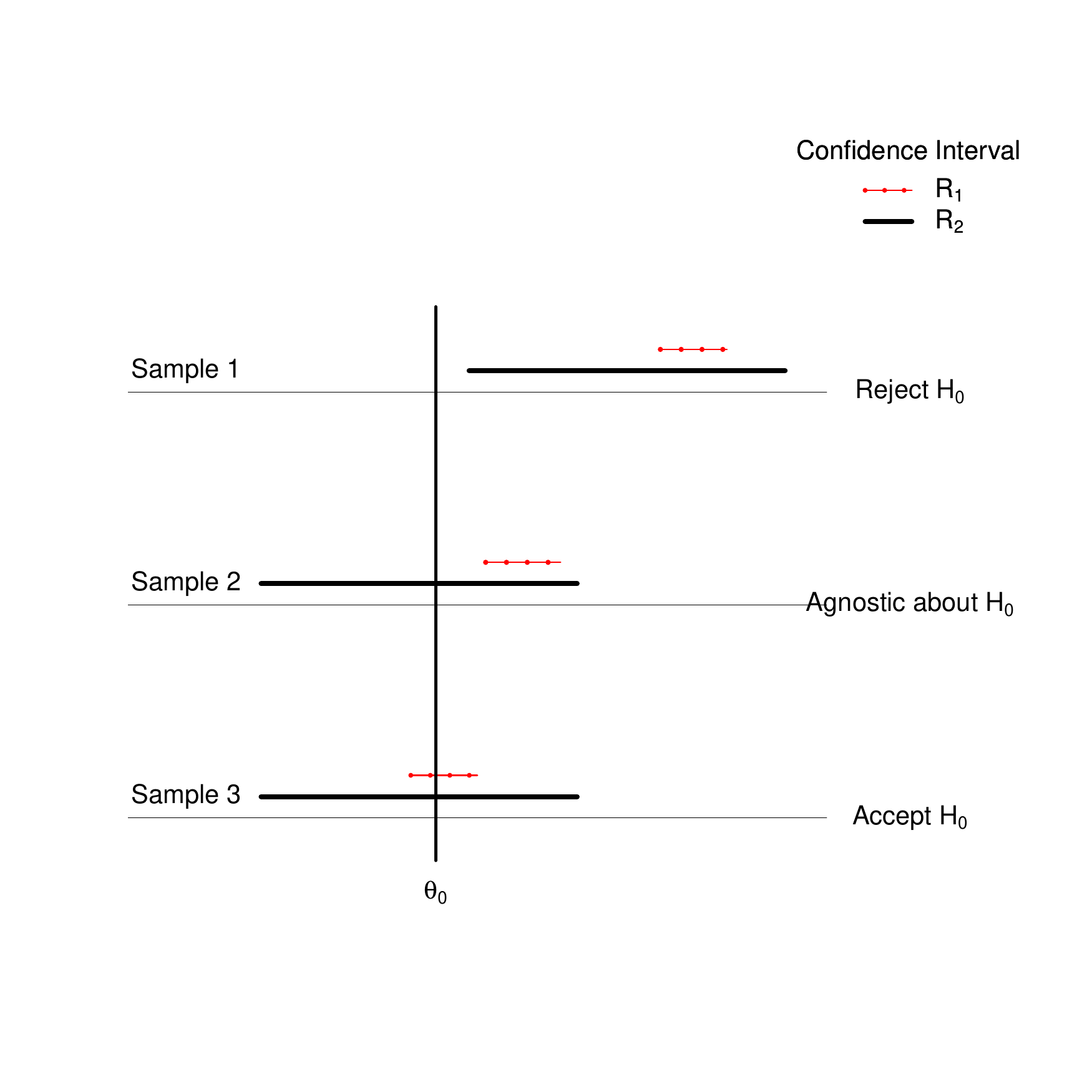}
 \caption{Illustration of the 
 agnostic test based on $R_1$ and $R_2$
 (\cref{defn:region-aprox})
 when $H_0:\theta=\tz$.}
 \label{fig:region-aprox}
\end{figure}

\begin{example}[Agnostic t-test]
 Consider \cref{ex:t-test}.
 For each $\mu^* \in \Re$,
 let $H_{0,\mu^*}: \mu = \mu^*$.
 The UMPU agnostic test is
 based on the region estimators
 \begin{align*}
  R_1(x) &=
  \left[\bar{X}-t_{n-1}(0.5(1+\beta))\sqrt{S^2/n},
  \bar{X}+t_{n-1}(0.5(1+\beta))\sqrt{S^2/n}\right] 
  \text{, and} \\
  R_2(x) &=
  \left[\bar{X}-t_{n-1}(1-0.5\alpha)\sqrt{S^2/n},
  \bar{X}+t_{n-1}(1-0.5\alpha)\sqrt{S^2/n}\right]
 \end{align*}
\end{example}

\begin{theorem}
 \label{thm:region-bilateral}
 For each $\theta^*$, let
 $H_{0,\theta^*}: \theta(1)=\theta^*$.
 \begin{enumerate}
  \item If $R_1(x) \subseteq R_2(x)$ are
  confidence regions for $\theta$  with
  confidence $1-\beta$ and $\alpha$, then
  for every $\theta^* \in \Re$,
  $\phi_{H_{0,\theta^*},R_1,R_2}$ is a
  $(\alpha,\beta)$-size test.
  \item Let 
  $\left(\phi_{H_{0,\theta^*}}\right)_{\theta^* \in \Re}$ be
  a collection of $(\alpha,\beta)$-size tests.
 If for every $\theta \in \Theta$ such that
 $\theta(1)=\theta^*$, 
 $\P_{\theta}(\phi_{H_{0,\theta^*}}=0) = \beta$ and
 $\P_{\theta}(\phi_{H_{0,\theta^*}}=1) = \alpha$, then
 there exist region estimators, 
 $R_1(x)$ and $R_2(x)$, such that
 $R_1(x) \subseteq R_2(x)$, 
 $R_1(x)$ and $R_2(x)$ are
 confidence regions for $\theta$ with, 
 respectively,  confidence $1-\beta$ and $\alpha$ and
 such that $\phi_{H_{0,\theta^*}}$ is 
 based on $R_1(x)$ and $R_2(x)$.
 \end{enumerate}
\end{theorem}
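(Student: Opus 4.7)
The theorem asserts a duality between bilateral $(\alpha,\beta)$-size agnostic tests and nested pairs of region estimators. The plan is to prove the two directions separately, treating Part 2 as the constructive half and Part 1 as the technical core.

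For Part 1, I would fix $\theta^* \in \Re$ and abbreviate $\phi := \phi_{H_{0,\theta^*},R_1,R_2}$. To bound the type I error, I would take an arbitrary $\theta_0 \in H_{0,\theta^*}$ (so $\theta_0(1) = \theta^*$) and observe that a rejection $R_2(X) \subseteq H_{0,\theta^*}^c$ forces $\theta_0 \notin R_2(X)$, since $\theta_0 \in H_{0,\theta^*}$ would otherwise produce an element of $R_2(X) \cap H_{0,\theta^*}$; hence $\P_{\theta_0}(\phi = 1) \leq \P_{\theta_0}(\theta_0 \notin R_2(X)) \leq \alpha$ by the confidence hypothesis on $R_2$. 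For the type II error I would fix $\theta_1$ with $\theta_1(1) \neq \theta^*$ and construct the ``sibling'' $\tilde\theta_1 \in H_{0,\theta^*}$ by setting $\tilde\theta_1(1) = \theta^*$ and $\tilde\theta_1(j) = \theta_1(j)$ for $j \geq 2$. Since $\tilde\theta_1 \in H_{0,\theta^*}$, the acceptance event satisfies $\{H_{0,\theta^*} \subseteq R_1(X)\} \subseteq \{\tilde\theta_1 \in R_1(X)\}$, and the confidence property of $R_1$ caps this by $\beta$ under $\P_{\theta_1}$.

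For Part 2 the construction is canonical: set $R_1(x) := \{\theta \in \Theta : \phi_{H_{0,\theta(1)}}(x) = 0\}$ and $R_2(x) := \{\theta \in \Theta : \phi_{H_{0,\theta(1)}}(x) \neq 1\}$. Nestedness is immediate from $\{0\} \subseteq \{0, \tfrac{1}{2}\}$. For the confidence levels, for each $\theta \in \Theta$, writing $\theta^* := \theta(1)$ puts $\theta \in H_{0,\theta^*}$, so the exact-level hypotheses yield $\P_\theta(\theta \in R_1(X)) = \P_\theta(\phi_{H_{0,\theta^*}} = 0) = \beta$ and $\P_\theta(\theta \in R_2(X)) = 1 - \P_\theta(\phi_{H_{0,\theta^*}} = 1) = 1 - \alpha$. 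To show each $\phi_{H_{0,\theta^*}}$ is based on $(R_1,R_2)$, I would note that any $\theta' \in H_{0,\theta^*}$ has $\theta'(1) = \theta^*$, so $\theta' \in R_1(x) \iff \phi_{H_{0,\theta^*}}(x) = 0$ independently of its remaining coordinates; hence $H_{0,\theta^*} \subseteq R_1(x) \iff \phi_{H_{0,\theta^*}}(x) = 0$, and a symmetric argument gives $R_2(x) \subseteq H_{0,\theta^*}^c \iff \phi_{H_{0,\theta^*}}(x) = 1$.

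The hard part will be the type II bound in Part 1: from the confidence hypothesis on $R_1$ one must extract a bound on the probability that $R_1$ includes a \emph{different} parameter $\tilde\theta_1 \neq \theta_1$, which ordinary coverage guarantees $\P_\theta(\theta \in R_1) \geq c$ do not directly supply. The argument goes through provided the ``confidence'' of $R_1$ encodes the uniform bound $\P_\theta(\theta' \in R_1(X)) \leq \beta$ for all $\theta' \in \Theta$, which is precisely what the canonical $R_1$ from the Part 2 construction satisfies through the test's type II error bound. Pinning down that this stronger reading is what is meant in the hypothesis (consistent with the t-test example following \Cref{defn:region-aprox}, where $\P_\mu(\mu^* \in R_1)$ is maximized at $\mu = \mu^*$) is the one delicate point; everything else is bookkeeping.
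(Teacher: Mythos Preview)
Your Part 2 construction and verification are essentially identical to the paper's: the paper defines $R^{(1)}_1(x)=\{\theta^*\in\Re:\phi_{H_{0,\theta^*}}(x)=0\}$ and $R^{(1)}_2(x)=\{\theta^*\in\Re:\phi_{H_{0,\theta^*}}(x)\in\{0,\tfrac12\}\}$, then sets $R_i(x)=R^{(1)}_i(x)\times\Re\times\cdots\times\Re$, and checks nestedness, the ``based on'' property, and the coverage probabilities exactly as you outline.

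For Part 1 there is a notable asymmetry: the paper's proof \emph{only} treats Part 2 and gives no argument for Part 1 at all. So your attempt there goes beyond what the paper supplies. Your type I bound is the standard argument (mirroring the paper's proof of \cref{thm:region-control}). Your caution about the type II bound is well placed: from an ordinary coverage guarantee $\P_\theta(\theta\in R_1(X))\geq c$ one cannot deduce a bound on $\P_{\theta_1}(\tilde\theta_1\in R_1(X))$ for $\tilde\theta_1\neq\theta_1$, and your ``sibling'' step needs exactly that. Note also that the paper's own Part 2 proof concludes $\P_\theta(\theta\in R_1(X))=\beta$ and $\P_\theta(\theta\in R_2(X))=1-\alpha$, i.e.\ $R_1$ has confidence $\beta$ and $R_2$ confidence $1-\alpha$, which is swapped relative to the theorem statement; the $t$-test example confirms this reading. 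With that corrected reading the type I bound goes through cleanly, but the type II bound still requires the stronger uniform property you isolated, so your diagnosis of the delicate point is accurate and not something the paper resolves.
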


\section{Consistent agnostic tests}
\label{sec::alternative}

A sequence of agnostic tests,
which is indexed on the sample size,
is consistent if there exists a large enough sample
such that, with high probability, 
the test accepts $H_0$ under $H_0$ and
reject $H_1$ under $H_1$.
That is, a sequence of agnostic tests is consistent if 
the respective sequence of power functions
converges to $1$ as the sample size goes to infinity.
This notion is formalized in \cref{def:consistent}.

\begin{definition}
 \label{def:consistent}
 A sequence of agnostic tests for $H_0$,
 $\seqn{\phi}$, is
 consistent if, 
 for every $\theta \in \Theta$,
 $\limn \pi_{\phi_n}(\theta)=1$.
\end{definition}

Under a wide variety of models,
it is impossible to obtain 
consistent agnostic tests.
A class of such models is described in
\cref{assumption:connected}.

\begin{assumption}[Non-separability between $H_0$ and $H_1$]
 \label{assumption:connected}
 \begin{enumerate} \
  \item $\Theta$ is connected.
  \item $H_0 \notin \{\emptyset, \Theta\}$.
  \item $\seqn{\phi}$ is a sequence of
  agnostic tests for $H_0$ such that,
  for every $n \in \mathbb{N}$ and
  $i \in \left\{0,\half,1\right\}$,
  $\P_{\theta}(\phi_n=i)$ is 
  continuous over $\theta$.
 \end{enumerate}
\end{assumption}
 
\Cref{assumption:connected} is met in
the examples presented in
\cref{sec:power,sec:pvalue}.
\Cref{thm:no-consistency} shows that,
under \cref{assumption:connected},
it is impossible to obtain a 
consistent sequence of hypothesis test.

\begin{theorem}
 \label{thm:no-consistency}
 Under \cref{assumption:connected},
 if $\seqn{\phi}$ is a sequence of
 $(\alpha, \beta)-size$ tests,
 where $\max(\alpha, \beta) < 1$, then
 $\seqn{\phi}$ is not consistent.
 Furthermore, under the same assumption, 
 if $\limn \alpha_n = 0$, 
 $\limn \beta_n = 0$ and
 $\seqn{\phi}$ is a sequence of
 $(\alpha_n,\beta_n)$-size tests,
 then for some $ \theta \in \Theta$,
 $\limn \P_{\theta}\left(\phi_n=\half\right)=1$.
\end{theorem}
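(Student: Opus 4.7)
The plan is to exploit the connectedness of $\Theta$ together with the continuity of $\theta \mapsto \P_\theta(\phi_n=i)$ by locating a "boundary" parameter $\theta^*$ at which the size constraints pin down the behavior of every test in the sequence. Concretely, the first step is to show that $\overline{\Tz} \cap \overline{\Tzc} \neq \emptyset$. If both closures were disjoint, then, because $\Tz$ and $\Tzc$ are both non-empty and together cover $\Theta$, one could partition $\Theta$ into two non-empty closed (hence clopen) sets, contradicting connectedness. So some $\theta^* \in \overline{\Tz} \cap \overline{\Tzc}$ exists, and without loss of generality I can assume $\theta^* \in \Tz$ (the case $\theta^* \in \Tzc$ is symmetric, exchanging the roles of $\alpha$ and $\beta$).

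Pick a sequence $(\theta_k) \subset \Tzc$ with $\theta_k \to \theta^*$. For each fixed $n$ and each $k$, the size bound gives $\P_{\theta_k}(\phi_n=0) \leq \beta$ (or $\beta_n$ in Part 2). By hypothesis $\theta \mapsto \P_\theta(\phi_n = 0)$ is continuous, so passing $k \to \infty$ yields $\P_{\theta^*}(\phi_n=0) \leq \beta$ (or $\beta_n$). This is the key inequality transferring a type II bound across the boundary into a point $\theta^* \in \Tz$.

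For Part 1, the previous paragraph already gives, for all $n$, $\pi_{\phi_n}(\theta^*) = \P_{\theta^*}(\phi_n=0) \leq \beta < 1$, so $\pi_{\phi_n}(\theta^*) \not\to 1$ and consistency fails. (If $\theta^* \in \Tzc$ instead, one argues symmetrically using the type I bound to conclude $\pi_{\phi_n}(\theta^*) \leq \alpha < 1$; the hypothesis $\max(\alpha,\beta)<1$ is exactly what is needed to cover both cases.) For Part 2, combine the boundary-transfer inequality $\P_{\theta^*}(\phi_n=0) \leq \beta_n$ with the direct type I bound $\P_{\theta^*}(\phi_n=1) \leq \alpha_n$ available because $\theta^* \in \Tz$. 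Then
\begin{align*}
 \P_{\theta^*}\!\left(\phi_n = \tfrac{1}{2}\right)
 &= 1 - \P_{\theta^*}(\phi_n=0) - \P_{\theta^*}(\phi_n=1)
 \geq 1 - \alpha_n - \beta_n \longrightarrow 1.
\end{align*}

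The only genuinely delicate point is the existence of the boundary parameter $\theta^*$, which requires the topological argument from connectedness; everything after that is a one-line application of continuity plus the size definition. I expect the write-up to be short, with the main care going into (i) verifying that $\overline{\Tz}\cap\overline{\Tzc}\neq\emptyset$ and (ii) cleanly handling both cases $\theta^*\in\Tz$ and $\theta^*\in\Tzc$ by symmetry so that the hypothesis $\max(\alpha,\beta)<1$ appears at the right moment.
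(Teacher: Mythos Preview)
Your proposal is correct and follows essentially the same approach as the paper: locate a boundary point $\theta^* \in \partial H_0 = \overline{\Tz}\cap\overline{\Tzc}$ via connectedness, then use continuity of $\theta\mapsto\P_\theta(\phi_n=i)$ to transfer both size bounds to $\theta^*$, yielding $\pi_{\phi_n}(\theta^*)\leq\max(\alpha,\beta)$ for Part 1 and $\P_{\theta^*}(\phi_n=\tfrac12)\geq 1-\alpha_n-\beta_n$ for Part 2. The only cosmetic difference is that the paper transfers both the type I and type II bounds by continuity simultaneously rather than splitting into the cases $\theta^*\in\Tz$ versus $\theta^*\in\Tzc$, but this is a presentational choice, not a different argument.
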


Despite \cref{thm:no-consistency},
consistency can be obtained by relaxing
the control over the test's errors.
In particular, one might drop
the requirement that the type II error probability
be controlled uniformly over 
all points in the alternative hypothesis.
The remainder of this section explores
alternative methods of 
controlling the type II error probabilities.

One alternative way to 
control the type II error probabilities is 
to require solely that
$\sup_{\theta \in \subH1} \P_{\theta}(\phi=0) \leq \beta$,
where $\subH1$ is a subset of $H_1$ which 
is relevant for the practitioner.
One procedure to choose $\subH1$ in practice
is to determine a desired effect size
through expert knowledge elicitation.
The effect size is often easier to interpret than
the value of the parameter itself.
This procedure is similar to what is 
often done in power calculations \citep{Neter1996}.

\begin{example}\emph{(Agnostic linear regression)}
 \label{ex:linearEffect}
 Consider the linear regression setting in 
 \cref{ex:regression}. Also, one wishes to
 test the hypothesis $H_0: \beta_k=0$ with 
 the agnostic hypothesis test, $\phi_{T,c_0,c_1}$    
 (\cref{rule::agnostic}), where 
 $T=\left|\frac{\widehat{\beta}_k}{\sqrt{\widehat{\V} [\widehat{\beta}_k]}}\right|$.
 For every $\theta \in \Theta$, the probability that 
 $\phi_{T,c_0,c_1}$ accepts $H_0$ is 
 \begin{align}
  \P_\theta\left(T\leq c_0\right)
  &= \P_\theta\left(-c_0 \leq 
  \frac{\widehat{\beta}_k}
  {\sqrt{\widehat{\V}[\widehat{\beta}_k]}} \leq c_0\right)
  = \P_\theta\left(-c_0 \leq 
  \frac{\frac{\widehat{\beta}_k-\beta_k}
  {\sqrt{\V[\widehat{\beta}_k]}}+
  \frac{\beta_k}{\sqrt{\V[\widehat{\beta}_k]}}}
  {\sqrt{\widehat{\V}[\widehat{\beta}_k]
  {\V}[\widehat{\beta}_k]^{-1}}} \leq c_0\right) \notag \\
  &=\P\left(-c_0 \leq T_{n-d-1,\delta_k} \leq c_0\right),     
  \label{eq::cohen}
 \end{align}
 where $T_{p,\delta}$ has a
 non-central $t$-distribution with 
 $p$ degrees of freedom and 
 non-centrality parameters $\delta$, that is,
 $\delta_k=\frac{\beta_k}{\sqrt{\V[\widehat{\beta}_k]}}= \frac{d_k}{\sqrt{a}_k}$,
 $a_k$ is the $k$-th element of 
 the diagonal of the matrix $(\mX^t \mX)^{-1}$, and
 $d_k = \frac{\beta_k}{\sigma}$ is 
 the Cohen's $d$ effect size of 
 the $k$-th variable on $Y$
 \citep{Cohen1977}.

 A practitioner can determine 
 a desired Cohen's effect size value, $d_k^*$ and 
 a $\beta \in (0,1)$, and
 use \Cref{eq::cohen} to choose
 $c_0$ such that the type II error is $\beta$
 when the effect size is $d_k^*$.
 Since, when $\delta>\delta'$,
 $T_{p,\delta}$ stochastically dominates
 $T_{p,\delta'}$ this procedure guarantees that
 \begin{align*}
  \sup_{\theta \in \subH1}
  \P_\theta(\phi=0) = \beta,
 \end{align*}
 where $\subH1 = \{\theta \in \Theta:\delta_k\sqrt{a_k} \geq d_k^*\}$.
 That is, type II error probabilities
 are controlled by $\beta$ for 
 every parameter value with effect size
 greater or equal to $d_k^*$.
 Note that, when $d_k^*=0$, the
 test which is obtained is 
 the standard $(\alpha,\beta)$-level test
 for $H_0^=$ in \Cref{ex:regression}
 when $k=(0,\ldots,0,1,0,\ldots,0)$ and $c=0$.
\end{example}

The next example applies the derivation
in \Cref{ex:linearEffect} to a real dataset.

\begin{example}
 \label{ex::infant}
 The Swiss Fertility and 
 Socioeconomic Indicators (1888) Data \citep{Mosteller1977},
 contains a fertility measure and socio-economic indicators
 for 47 French-speaking provinces of Switzerland.
 \Cref{tab::swiss} presents the estimates of 
 regressing the infant mortality rate over 
 the other covariates using $d_k^*=0.25$,
 for every $k$, $\alpha=0.05$, and $\beta=0.2$.
 The analysis indicates that
 both the agriculture index of a province and
 the percentage of catholics on it 
 are not associated to its infant mortality rate. 
 On the other hand, there is 
 an association between 
 fertility and infant mortality rate. 
 Finally, it is not possible to assert whether 
 education and examination (percentage of 
 draftees receiving highest mark on army examination)
 are associated to the response variable.
 \Cref{img::power_reg} shows the probability of
 each decision as a function of 
 Cohen's $d$ effect size.

 \begin{table}[H]
  \centering
  \begin{tabular}{rlllll}
   \hline
   & Estimate & Std. Error & t-value & p-value & Decision \\ 
   \hline
  (Intercept) & 8.667 & 5.435 & 1.595 & 0.119 & Accept \\ 
    Fertility & 0.151 & 0.054 & 2.822 & 0.007 & Reject \\ 
  Agriculture & -0.012 & 0.028 & -0.418 & 0.678 & Accept \\ 
  Examination & 0.037 & 0.096 & 0.385 & 0.702 & Agnostic \\ 
    Education & 0.061 & 0.085 & 0.719 & 0.476 & Agnostic \\ 
     Catholic & 0.001 & 0.015 & 0.005 & 0.996 & Accept \\ 
  \hline
  \end{tabular}
  \caption{Agnostic regression analysis over 
  the Swiss dataset (\cref{ex::infant}).}
  \label{tab::swiss}
 \end{table}

 \begin{figure}[!htpb]
  \centering
  \includegraphics[width=0.6\textwidth]{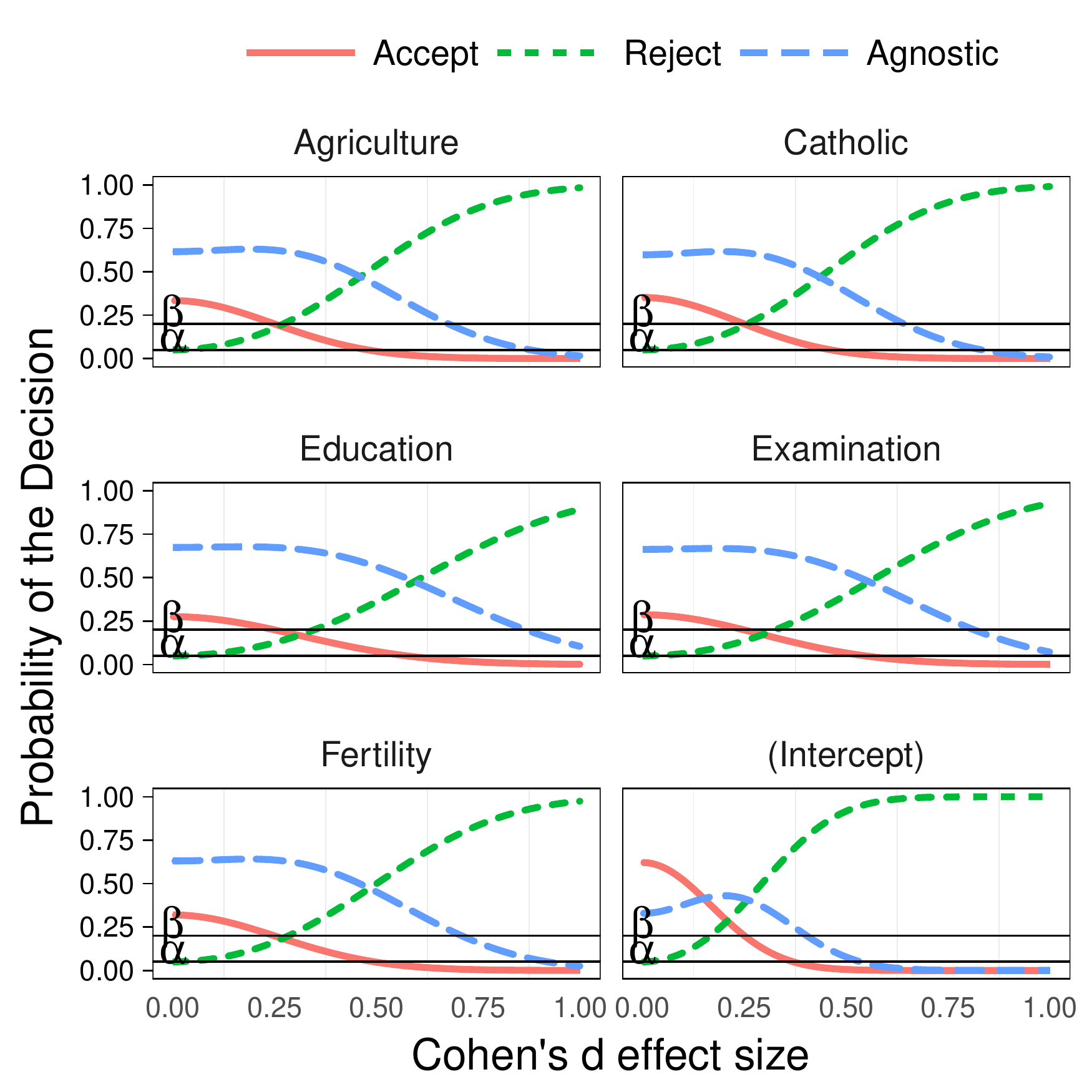}
  \caption{Probability of each decision for 
  the regression coefficients on 
  the Swiss dataset (\cref{ex::infant}).}
  \label{img::power_reg}
 \end{figure}
\end{example}

While controlling the type II error probabilities 
only for a class of effect sizes,
such as illustrated in \cref{ex:linearEffect,ex::infant},
it is possible to obtain a consistent 
sequence of agnostic hypothesis tests. 
\Cref{ex:consistency} illustrates this possibility
in a bilateral $z$-test.

\begin{example}
 \label{ex:consistency}
 Let $X_{1},\ldots,X_{n}$ be a i.i.d. sample with
 $X_{i} \sim N(\mu,\sigma^2)$, where
 $\mu \in \Re$ and $\sigma^2$ is known.
 Let $H_0: \mu=0$, $\alpha_n=\beta_n=\exp(-o(n))$,
 $a_n=\frac{-\Phi^{-1}(0.5\alpha_n)\sigma}{\sqrt{n}}$,
 $b_n$ be such that $b_n \leq a_n$ and
 $b_n^{-1}=o(\sqrt{n})$, 
 $\cc_n = (-a_n,-b_n,b_n,a_n)$, and
 $\gamma_n=b_n+\left(\frac{-2\log(\sqrt{2\pi}\beta_n)}{n}\right)^{0.5}$.
 The agnostic test $\phi_{\bar{X}_n,\cc_n}$
 controls the type I error by $\alpha_n$, and
 controls the type II error over 
 $H_1^*: |\mu| > \gamma_n$ by $\beta_n$.
 Furthermore, for every $\mu \in \Re$,
 $\lim_{n \rightarrow \infty}\pi_{\phi_{\bar{X}_n,\cc_n}}(\mu)=1$. That is,
 $\left(\phi_{\bar{X}_n,\cc_n}\right)_{n \in \mathbb{N}}$
 is consistent.
\end{example}

\Cref{ex:consistency} shows that, if
a sequence of tests doesn't control
the type II error probabilities in a
neighborhood of $H_0$, then it can be consistent.
This occurs because, 
contrary to $H_0$ and $H_1$ that satisfy
\cref{assumption:connected},
$H_0$ and $H_1^*$ are
``probabilistically separated''.
Also note that, since $\limn \gamma_n = 0$,
for every $\theta \in H_1$, 
there exists an $n^*$ after which
the error II probability for $\theta$ is
controlled by $\beta_n$.

\section{Final remarks}
\label{sec::final}

Since agnostic tests control the
type I and II error probabilities,
their outcomes are more interpretable than
the ones obtained using
standard hypothesis tests.
This paper provides several procedures to
construct agnostic tests.
In several statistical models, 
(unbiased) uniformly most powerful agnostic tests
are obtained.
When such tests are unavailable,
an alternative that is based on
standard p-values is presented.
The paper also provides
several links between region estimators 
and agnostic tests, which shows
in particular that
$(\alpha,\beta)$-level tests can be 
fully coherent from a logical perspective.
Finally, we have shown that 
although one cannot obtain consistency in
agnostic tests that control
type I and type II error probabilities uniformly,
this goal can be achieved by 
relaxing the control of the type II error probabilities.

An R package that implements several of 
the agnostic tests developed here is
available at
\url{https://github.com/vcoscrato/agnostic}.

\section{Acknowledgments}

This was partially funded  by Funda\c{c}\~ao de Amparo 
\`a Pesquisa do Estado de S\~ao Paulo (2017/03363-8).

\bibliography{agnostic-control}

\section{Demonstrations}
\label{sec:demo}

\begin{definition}
 Let $g_{0}(x) = \I(x=0)$ and
 $g_{1}(x) = \I(x=1)$.
\end{definition}

\begin{lemma}
 \label{lemma:standardize}
 For every agnostic test, $\phi$,
 \begin{enumerate}
  \item $g_{0}(\phi)$ and $g_{1}(\phi)$ 
  are standard tests.
  \item for every $\theta \in \Theta$,
  $\P_{\theta}(g_{0}(\phi)=1)=\P_{\theta}(\phi=0)$ and
  $\P_{\theta}(g_{1}(\phi)=1)=\P_{\theta}(\phi=1)$.
  \item If $\phi$ is unbiased, then
  $g_1(\phi)$ is unbiased for $H_0$ and
  $g_0(\phi)$ is unbiased for $H^*_0 = H_1$.
 \end{enumerate}
\end{lemma}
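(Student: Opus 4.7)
The plan is to prove each of the three items in turn by unfolding the relevant definitions; none requires any quantitative estimate, so the argument is essentially a bookkeeping exercise relating the deterministic maps $g_0$ and $g_1$ to \Cref{def:std} and the unbiasedness condition for agnostic tests.

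For item~(1), I would simply note that $g_0$ and $g_1$ are indicator functions taking values in $\{0,1\}$, so the compositions $g_0(\phi)$ and $g_1(\phi)$ are maps $\sX \to \{0,1\}$ and thus standard tests in the sense of \Cref{def:std}. For item~(2), the pointwise identity $g_i(\phi(x))=1 \iff \phi(x)=i$ gives the set equality $\{x \in \sX : g_i(\phi)(x)=1\} = \{x \in \sX : \phi(x)=i\}$, and applying $\P_\theta$ to both sides yields the two stated identities.

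The only nontrivial content is item~(3). Recall that a standard test $\psi$ of a null $H_0'$ is unbiased when $\sup_{\theta \in H_0'} \P_\theta(\psi=1) \leq \inf_{\theta \in (H_0')^c} \P_\theta(\psi=1)$. Applying this with $\psi = g_1(\phi)$ and null $H_0$, item~(2) rewrites the inequality as $\sup_{\theta_0 \in H_0} \P_{\theta_0}(\phi=1) \leq \inf_{\theta_1 \in H_1} \P_{\theta_1}(\phi=1)$, which is exactly the second line of the unbiasedness condition for $\phi$. Applying the same principle to $\psi = g_0(\phi)$ with null $H_0^* = H_1$ (and hence complement $H_0$), the unbiasedness requirement becomes $\sup_{\theta_1 \in H_1} \P_{\theta_1}(\phi=0) \leq \inf_{\theta_0 \in H_0} \P_{\theta_0}(\phi=0)$, which is precisely the first line of the unbiasedness condition for $\phi$. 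Thus the hypothesis that $\phi$ is unbiased delivers exactly what both projections need.

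There is no genuine obstacle; the only point of care is to correctly swap the roles of the null and alternative when passing from $g_1(\phi)$, which rejects $H_0$ whenever $\phi$ does, to $g_0(\phi)$, which rejects $H_0^*=H_1$ whenever $\phi$ accepts $H_0$. Keeping track of this swap makes it transparent that the two halves of the unbiasedness inequality for $\phi$ correspond, respectively, to the unbiasedness of $g_1(\phi)$ for $H_0$ and of $g_0(\phi)$ for $H_1$.
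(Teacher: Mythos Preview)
Your proposal is correct and follows essentially the same approach as the paper's own proof: items (1) and (2) are dispatched directly from the definitions of $g_0$, $g_1$, and \Cref{def:std}, and item (3) is obtained by translating, via item (2), the standard unbiasedness inequality for $g_1(\phi)$ (respectively $g_0(\phi)$) into the second (respectively first) line of the agnostic unbiasedness condition for $\phi$. The paper's version is slightly terser---it writes out only the $g_0(\phi)$ case and invokes ``similarly'' for $g_1(\phi)$---but the logical content is identical.
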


\begin{proof}[Proof of \cref{lemma:standardize}] \
 The first two items follow directly from
 \cref{def:std} and 
 the definitions of $g_0$ and $g_1$.
 Next, if $\phi$ is unbiased, then 
 $\alpha_{\phi}+\beta_{\phi} \leq 1$. Also,
 \begin{align*}
  \sup_{\theta_1 \in H_1}\P_{\theta_1}(g_{0}(\phi)=1)
  &= \sup_{\theta_1 \in H_1}\P_{\theta_1}(\phi=0) 
  = \beta_\phi \\
 \P_{\theta_0}(g_{0}(\phi)=1) 
 &= \P_{\theta_0}(\phi=0) 
 \geq \beta_\phi 
 & \text{for every } \theta_0 \in H_0
 \end{align*}
 That is, $g_{0}(\phi)$ is unbiased for $H^*_0$. 
 Similarly, $g_{1}(\phi)$ is unbiased for $H_0$.
\end{proof}

\begin{lemma}
 \label{lemma:k-r}
 Let $c, c_0, c_1 \in \Re$, $c_0 \leq c_1$ and 
 $\phi$ be an agnostic test.
 Also, define $H_0: \theta_0 \leq \theta^*$
 and $H_1: \theta > \theta^*$, and let
 $\theta_0 \in H_0$ and $\theta_1 \in H_1$. 
 Under \Cref{assumption:k-r}, 	
 \begin{enumerate}
  \item If $\P_{\theta^*}(\phi_{T,c_0,c_1}=1) \geq \P_{\theta^*}(\phi=1)$, then
  $\P_{\theta_1}(\phi_{T,c_0,c_1}=1) \geq \P_{\theta_1}(\phi=1).$
  \item If $\beta_{\phi_{T,c_0,c_1}} \geq \beta_{\phi}$, then
  $\P_{\theta_0}(\phi_{T,c_0,c_1}=0) \geq \P_{\theta_0}(\phi=0).$
  \item If $\P_{\theta^*}(\phi_{T,c,c}=1) = \P_{\theta^*}(\phi=1)$, then
  $\phi_{T,c,c} \succeq \phi$.
 \end{enumerate}
\end{lemma}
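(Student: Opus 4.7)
My plan is to view all three items as applications of the Neyman--Pearson lemma combined with the monotone likelihood ratio structure of \cref{assumption:k-r}, using the standardizations $g_0(\phi)$ and $g_1(\phi)$ from \cref{lemma:standardize}. The key tool I will use repeatedly is that MLR in $T$ implies, for every $\theta_1 > \theta^*$, the test $\I(T > c)$ is the Neyman--Pearson most powerful test of $\theta = \theta^*$ against $\theta = \theta_1$ at level $\P_{\theta^*}(T > c)$, and symmetrically, for every $\theta_0 < \theta^*$, the test $\I(T \leq c)$ is the Neyman--Pearson most powerful test of $\theta = \theta^*$ against $\theta = \theta_0$ at level $\P_{\theta^*}(T \leq c)$. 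Since $\P_\theta(\phi_{T,c_0,c_1}=1) = \P_\theta(T > c_1)$ and $\P_\theta(\phi_{T,c_0,c_1}=0) = \P_\theta(T \leq c_0)$, comparisons of $\phi_{T,c_0,c_1}$ with an arbitrary $\phi$ reduce to NP bounds on $g_1(\phi)$ and $g_0(\phi)$ respectively.

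Item 1 will then be immediate: the hypothesis states that $\P_{\theta^*}(g_1(\phi)=1) = \P_{\theta^*}(\phi=1) \leq \P_{\theta^*}(T > c_1)$, which is exactly the level condition that NP requires, and applying NP at each $\theta_1 > \theta^*$ yields $\P_{\theta_1}(\phi=1) \leq \P_{\theta_1}(T > c_1)$. For item 2, the argument is analogous once I convert the global bound $\beta_\phi \leq \beta_{\phi_{T,c_0,c_1}}$ into a pointwise inequality at $\theta^*$. Under MLR and the absolute continuity of \cref{assumption:k-r}, $\P_\theta(T \leq c_0)$ is non-increasing and continuous in $\theta$ (the latter by dominated convergence applied to $\int \I(T \leq c_0) f_\theta \, d\lambda$), so $\beta_{\phi_{T,c_0,c_1}} = \P_{\theta^*}(T \leq c_0)$; the same continuity argument, applied to $\int \I(\phi = 0) f_\theta \, d\lambda$, gives $\beta_\phi \geq \P_{\theta^*}(\phi=0)$. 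Combining these with the hypothesis yields $\P_{\theta^*}(\phi=0) \leq \P_{\theta^*}(T \leq c_0)$, which is precisely the NP level condition needed to conclude $\P_{\theta_0}(\phi=0) \leq \P_{\theta_0}(T \leq c_0)$ for every $\theta_0 < \theta^*$, with the boundary case $\theta_0 = \theta^*$ being the inequality already established.

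For item 3 I will split on whether $\theta$ is in $H_0$ or $H_1$. For $\theta_1 > \theta^*$, the hypothesis is precisely the assumption of item 1 specialized to $c_0 = c_1 = c$, so $\P_{\theta_1}(\phi_{T,c,c}=1) \geq \P_{\theta_1}(\phi=1)$. For $\theta_0 \leq \theta^*$, the equality in the hypothesis produces the needed NP slack at the boundary: $\P_{\theta^*}(T \leq c) = 1 - \P_{\theta^*}(\phi=1) = \P_{\theta^*}(\phi=0) + \P_{\theta^*}(\phi=\half) \geq \P_{\theta^*}(\phi=0)$, and NP then delivers $\P_{\theta_0}(\phi=0) \leq \P_{\theta_0}(T \leq c) = \P_{\theta_0}(\phi_{T,c,c}=0)$ for every $\theta_0 < \theta^*$, with $\theta_0 = \theta^*$ immediate from the preceding display. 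The main obstacle throughout is the continuity step in item 2, which is what converts a sup over the open alternative $H_1 = \{\theta > \theta^*\}$ into a pointwise statement at $\theta^*$ that NP can consume; beyond that, everything is bookkeeping in the Karlin--Rubin/NP machinery applied to $g_0(\phi)$ and $g_1(\phi)$.
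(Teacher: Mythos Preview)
Your arguments for items 1 and 3 are essentially the paper's: both reduce to Neyman--Pearson via the standardizations $g_0(\phi)$ and $g_1(\phi)$, using MLR to identify $\I(T>c_1)$ and $\I(T\le c)$ as the NP tests at the boundary $\theta^*$.

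Item 2 is where your route departs from the paper's, and your version has a gap. You try to convert the global hypothesis $\beta_\phi \le \beta_{\phi_{T,c_0,c_1}}$ into a pointwise inequality at $\theta^*$, asserting that $\theta\mapsto\P_\theta(T\le c_0)$ and $\theta\mapsto\P_\theta(\phi=0)$ are continuous ``by dominated convergence applied to $\int \I(\cdot)f_\theta\,d\lambda$.'' But \cref{assumption:k-r} gives only that each $\P_\theta$ is absolutely continuous with respect to Lebesgue and that the family has MLR in $T$; it says nothing about continuity of $f_\theta(x)$ in $\theta$, nor does it supply an integrable envelope. Absolute continuity of $\P_\theta$ with respect to $\lambda$ is a statement about the measure for \emph{fixed} $\theta$ and has no bearing on how $f_\theta$ varies with $\theta$, so your DCT step is unjustified. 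Without it, you cannot pass from the sup over the open set $H_1=\{\theta>\theta^*\}$ to the boundary value at $\theta^*$, and the NP reduction stalls.

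The paper avoids this entirely: instead of localizing at $\theta^*$, it applies the Karlin--Rubin theorem to the \emph{reversed} problem $H_0^*=H_1$, $H_1^*=H_0$. In that problem the size of $g_0(\phi_{T,c_0,c_1})=\I(T\le c_0)$ is exactly $\beta_{\phi_{T,c_0,c_1}}$ and the size of $g_0(\phi)$ is exactly $\beta_\phi$, so the hypothesis $\beta_\phi\le\beta_{\phi_{T,c_0,c_1}}$ is already the size comparison Karlin--Rubin consumes, and the UMP conclusion gives $\P_{\theta_0}(T\le c_0)\ge\P_{\theta_0}(\phi=0)$ for all $\theta_0\in H_0$ directly. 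No continuity in $\theta$ is invoked.
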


\begin{proof} \
 \begin{enumerate}
 
 \item Let $\theta_1 \in H_1$. Note that 
 $g_1(\phi_{T,c_0,c_1})=\phi_{T,c_1,c_1}$.
 Furthermore, it follows from 
 \Cref{lemma:standardize} that
 $\P_{\theta^*}(g_1(\phi_{T,c_0,c_1})=1) \geq \P_{\theta^*}(g_1(\phi)=1)$.
 Therefore, by defining $H_0^*:\theta=\theta^*$ 
 and $H_1^*:\theta=\theta_1$,
 it follows from 
 \Cref{assumption:k-r}.2 and
 the Neyman-Pearson lemma that
 $\P_{\theta_1}(g_{1}(\phi_{T,c_0,c_1})=1) \geq \P_{\theta_1}(g_{1}(\phi)=1)$. 
 The inequality
 $\P_{\theta_1}(\phi_{T,c_0,c_1}=1) \geq \P_{\theta_1}(\phi=1)$
 follows from \Cref{lemma:standardize}.
  
 \item Let $\theta_0 \in H_0$. Note that 
 $g_{0}(\phi_{T,c_0,c_1})=1-\phi_{T,c_0,c_0}$.
 Furthermore, it follows from 
 \Cref{lemma:standardize} that
 $\sup_{\theta_1 \in H_1}\P_{\theta_1}(g_0(\phi_{T,c_0,c_1})=1) \geq \sup_{\theta_1 \in H_1}\P_{\theta_1}(g_0(\phi)=1)$
 Therefore, by taking $H_0^*=H_1$ and
 $H_1^*=H_0$, it follows from
 \Cref{assumption:k-r}.2 and
 the Karlin-Rubin theorem that
 $\P_{\theta_0}(g_{1}(\phi_{T,c_0,c_1})=1) \geq \P_{\theta_0}(g_{1}(\phi)=1)$.
 It follows from \Cref{lemma:standardize} that
 $\P_{\theta_0}(\phi_{T,c_0,c_1}=0) \geq \P_{\theta_0}(\phi=0)$.
  
 \item It follows from \Cref{lemma:k-r}.1
 that, for every $\theta_1 \in H_1$,
 $\P_{\theta_1}(\phi_{T,c,c}=1) \geq \P_{\theta_1}(\phi=1)$. 
 Next, obtain from 
 $\P_{\theta^*}(\phi_{T,c,c}=1)=\P_{\theta^*}(\phi=1)$ 
 and $\phi_{T,c,c}$ being a standard test,
 that $\P_{\theta^*}(\phi_{T,c,c}=0) \geq \P_{\theta^*}(\phi=0)$. 
 It follows from \Cref{lemma:standardize} that
 $\P_{\theta^*}(g_{0}(\phi_{T,c,c})=1) \geq \P_{\theta^*}(g_{0}(\phi)=1)$.
 By taking $H_0:\theta=\theta^*$ and
 $H_1:\theta=\theta_0$, it follows from 
 \Cref{assumption:k-r}.2 and the 
 Neyman-Pearson lemma that
 $\P_{\theta_0}(g_{1}(\phi_{T,c,c})=1) \geq \P_{\theta_0}(g_{1}(\phi)=1)$.
 Obtain from \Cref{lemma:standardize} that
 $\P_{\theta_0}(\phi_{T,c,c}=0) \geq \P_{\theta_1}(\phi=0)$. 
 Conclude that
 $\phi_{T,c,c} \succeq \phi$.
 \end{enumerate}
\end{proof}

\begin{proof}[Proof of \Cref{thm:k-r}]
 Let $\phi$ be an arbitrary
 $(\alpha,\beta)$-size agnostic test.
 \begin{enumerate}
 \item Conclude from \Cref{assumption:k-r} that
 \begin{align}
  \label{eq:ump-1}
  P_{\theta^*}(\phi_{T,c_0,c_1}=1) &= \alpha
  \geq \alpha_{\phi}
  \geq \P_{\theta^*}(\phi=1) \nonumber \\
  \beta_{\phi_{T,c_0,c_1}} 
  &=\beta \geq \beta_{\phi}
 \end{align}
 It follows from \cref{eq:ump-1} and
 \Cref{lemma:k-r} that
 $\phi_{T,c_0,c_1} \succeq \phi$.
 Since $\phi$ was arbitrary, conclude that
 $\phi_{T,c_0,c_1}$ is an UMP
 $(\alpha,\beta)$-level agnostic test.
 
  \item Either there exists 
  $c \in [c_1,c_0]$ such that
 $\P_{\theta^*}(\phi_{T,c,c}=1)=\P_{\theta^*}(\phi=1)$ 
 or there exists no such $c$. 
 If there exists such a $c$, then 
 it follows from \Cref{lemma:k-r} that
 $\phi_{T,c,c} \succeq \phi$.
 Next, assume there exists no such $c$.
 Note that $\phi$ has size $(\alpha,\beta)$ 
 and, therefore,
 \begin{align*}
 \P_{\theta^*}(\phi_{T,c_0,c_0}=1)
 = \alpha \geq 
 \P_{\theta^*}(\phi=1).
 \end{align*}
 Since $\P_{\theta^*}(\phi_{T,c,c}=1)$ 
 decreases continuously over $c$, conclude that
 \begin{align}
  \label{eqn:ump-2}
  \P_{\theta^*}(\phi_{T,c_1,c_1}=1)
  &\geq \P_{\theta^*}(\phi=1) \nonumber \\
  \beta_{\phi_{T,c_1,c_1}} = \beta
  &\geq \beta_{\phi}
 \end{align}
 Conclude from \cref{eqn:ump-2} and
 \Cref{lemma:k-r} that
 $\phi_{T,c_1,c_1} \succeq \phi$.
 \end{enumerate}
\end{proof}

\begin{lemma}
 \label{lemma:umpu-1}
 Let $c, c_0, c_1 \in \Re$, $c_0 \leq c_1$ and 
 $\phi$ be an unbiased test.
 Define $H_0: \theta(1) \leq \theta^*$
 and $H_1: \theta(1) > \theta^*$ and let
 $\bar{\theta} \in \Theta$ be such that 
 $\theta(1)=\theta^*$. 
 Under \Cref{assumption:umpu-1}, 	
 \begin{enumerate}
  \item If $\P_{\bar{\theta}}(\phi_{V,c_0,c_1}=1) \geq \P_{\bar{\theta}}(\phi=1)$, then 
  $\forall \theta_1 \in H_1$,
  $\pi_{\phi_{V,c_0,c_1}}(\theta_1) \geq \pi_{\phi}(\theta_1)$.
  \item If $\P_{\bar{\theta}}(\phi_{V,c_0,c_1}=0) \geq \beta_{\phi}$, then $\forall \theta_0 \in H_0$,
  $\pi_{\phi_{V,c_0,c_1}}(\theta_0) \geq \pi_{\phi}(\theta_0)$.
 \end{enumerate}
\end{lemma}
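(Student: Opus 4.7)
The plan is to mirror the proof of Lemma \ref{lemma:k-r}, translating each agnostic-test comparison into a standard-test comparison via Lemma \ref{lemma:standardize} and then invoking the classical UMPU theorem for multi-parameter exponential families (as in Lehmann and Romano, Ch.~4). Assumption \ref{assumption:umpu-1} is precisely what makes that theorem applicable here: $T = (h_2(X),\ldots,h_n(X))$ is sufficient for the nuisance components of $\theta$, $V$ is monotone in $h_1(X)$, and $V$ is independent of $T$ when $\theta(1) = \theta^*$, so the unconditional test that rejects for extreme values of $V$ has the same size as its conditional counterpart and is UMPU.

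For Part 1, I would observe that $g_1(\phi_{V,c_0,c_1}) = \phi_{V,c_1,c_1}$, the standard test that rejects when $V > c_1$. By Lemma \ref{lemma:standardize}, $g_1(\phi)$ is an unbiased standard test for $H_0$, and since the power function of any unbiased test in this exponential family is continuous in $\theta(1)$, its size is attained at the boundary: $\alpha_{g_1(\phi)} = \P_{\bar{\theta}}(g_1(\phi)=1)$. The hypothesis, together with Lemma \ref{lemma:standardize}, then gives $\alpha_{g_1(\phi)} \leq \P_{\bar{\theta}}(V > c_1)$, so $\phi_{V,c_1,c_1}$ is the UMPU test of a size no smaller than $\alpha_{g_1(\phi)}$. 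Applying the UMPU theorem yields $\P_{\theta_1}(\phi_{V,c_1,c_1}=1) \geq \P_{\theta_1}(g_1(\phi)=1)$ for every $\theta_1 \in H_1$, and translating back via Lemma \ref{lemma:standardize} gives the claim.

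For Part 2, the argument is symmetric but uses the reversed hypothesis $H_0^{\dagger}: \theta(1) \geq \theta^*$ versus $H_1^{\dagger}: \theta(1) < \theta^*$. Note that $g_0(\phi_{V,c_0,c_1}) = 1 - \phi_{V,c_0,c_0}$ rejects when $V \leq c_0$; because $V$ is increasing in $h_1(X)$, this is the UMPU test of size $\P_{\bar{\theta}}(V \leq c_0)$ for $H_0^{\dagger}$. Lemma \ref{lemma:standardize} shows that $g_0(\phi)$ is unbiased for $H_0^{\dagger}$ with size $\beta_\phi$, while the hypothesis rewrites as $\beta_\phi \leq \P_{\bar{\theta}}(g_0(\phi_{V,c_0,c_1})=1)$. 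The UMPU theorem applied to $H_0^{\dagger}$ then yields $\P_{\theta_0}(g_0(\phi_{V,c_0,c_1})=1) \geq \P_{\theta_0}(g_0(\phi)=1)$ for every $\theta_0 \in H_0 = H_1^{\dagger}$, which translates back to $\pi_{\phi_{V,c_0,c_1}}(\theta_0) \geq \pi_\phi(\theta_0)$.

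The main obstacle will be the bookkeeping around the boundary point $\bar{\theta}$ and the need, in Part 2, to pivot to the reversed hypothesis so that the direction in which $V$ indicates evidence against the null matches the direction required by the UMPU theorem. In both parts one must also verify that the competitor's size (equal to $\alpha_\phi$ in Part 1 and $\beta_\phi$ in Part 2) is actually attained at $\bar{\theta}$ rather than elsewhere in $H_0$, which is where continuity of the power function in the exponential family combined with the unbiasedness hypothesis does the work.
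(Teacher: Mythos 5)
Your proposal is essentially the paper's own proof: translate to standard tests via $g_1(\phi_{V,c_0,c_1})=\phi_{V,c_1,c_1}$ and $g_0(\phi_{V,c_0,c_1})=1-\phi_{V,c_0,c_0}$ (Lemma \ref{lemma:standardize}), invoke Lehmann's one-sided UMPU result — for part 2 with the reversed hypothesis $\theta(1)\geq\theta^*$ — and use continuity of power functions in the exponential family to pin the relevant sizes to the boundary point $\bar{\theta}$. The only slip is cosmetic: $\{\theta\in\Theta:\theta(1)<\theta^*\}$ is the interior of $H_0$, not $H_0$ itself, so the part-2 inequality at boundary points must be recovered by the same continuity argument (the paper does this by passing to the closure $\overline{H_1^*}=H_0$), which your closing paragraph already anticipates.
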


\begin{proof} \
 \begin{enumerate}
  \item Let $\theta_1 \in H_1$. 
  We wish to show that 
  $\P_{\theta_1}(g_1(\phi_{V,c_0,c_1})=1) \geq \P_{\theta_1}(g_1(\phi)=1)$,
  Since $g_1(\phi_{V,c_0,c_1})$ and 
  $g_1(\phi)$ are standard tests,
  our strategy is to obtain the inequality from
  \citet{Lehmann2006}[p.151].
  In order to obtain this result,
  Assumption \ref{assumption:umpu-1} is used to 
  show that $g_1(\phi_{V,c_0,c_1})$ satisfies
  the required conditions.
  
  Let $\Theta^*=\{\theta \in \Theta: \theta(1) \geq \bar{\theta}\}$.
  Note that
  $g_{1}(\phi_{V,c_0,c_1})=\phi_{V,c_1,c_1}$. 
  Also, it follows from
  \Cref{lemma:standardize} that
  $g_1(\phi)$ is unbiased for $H_0$ under $\Theta$.
  Since $H_0$ is more restrictive under $\Theta^*$,
  $g_1(\phi)$ is also unbiased for 
  $H_0$ under $\Theta^*$.
  Moreover, it follows from
  \Cref{lemma:standardize} that
  $\P_{\bar{\theta}}(g_1(\phi_{V,c_0,c_1})=1) \geq \P_{\bar{\theta}}(g_1(\phi)=1)$.
  It follows from 
  \Cref{assumption:umpu-1} that,
  under $\Theta^*$,
  $\alpha_{g_1(\phi_{V,c_0,c_1})} \geq \alpha_{g_1(\phi)}$. Putting all of the 
  above conditions together, conclude that
  $\P_{\theta_1}(g_1(\phi_{V,c_0,c_1})=1) \geq \P_{\theta_1}(g_1(\phi)=1)$ by applying
  \citet{Lehmann2006}[p.151] in $\Theta^*$.
  It follows directly from  
  \Cref{lemma:standardize} that
  $\P_{\theta_1}(\phi_{V,c_0,c_1}=1) \geq \P_{\theta_1}(\phi=1)$,
  which is equivalent to, 
  $\pi_{\phi_{V,c_0,c_1}}(\theta_1) \geq \pi_{\phi}(\theta_1)$.
  
  \item Let $\theta_0 \in \{\theta \in \Theta: \theta(1) < \theta^*\}.$
  Note that
  $g_{0}(\phi_{V_c0,c_1})=1-\phi_{V,c_0,c_0}$. 
  Also, it follows from 
  \Cref{lemma:standardize} that
  $g_0(\phi)$ is unbiased for $H_0^*=H_1$.
  Also, obtain from 
  \Cref{lemma:standardize} and
  \Cref{assumption:umpu-1}.2 that
  $\P_{\bar{\theta}}(g_0(\phi_{V,c_0,c_1})=1) \geq \sup_{\theta_1 \in \Theta_1 \cup \{\theta\}}\P_{\theta_1}(g_0(\phi)=1)$.
  Therefore, by taking
  $H_0^*: \theta(1) \geq \theta^*$,
  it follows from
  from \Cref{assumption:umpu-1}
  and \citet{Lehmann2006}[p.151] that
  $\P_{\theta_0}(g_0(\phi_{V,c_0,c_1})=1) \geq \P_{\theta_0}(g_0(\phi)=1)$.
  Conclude from \Cref{lemma:standardize} that
  $\P_{\theta_0}(\phi_{V,c_0,c_1}=0) \geq 	 \P_{\theta_0}(\phi=0)$.
  Since $\theta_0$ was 
  arbitrary in $H_1^*$,
  conclude from 
  \Cref{assumption:umpu-1}.2 that,
  for every $\theta_0 \in \overline{H_1^*} = H_0$,
  $\P_{\theta_0}(\phi_{V,c_0,c_1}=0) \geq \P_{\theta_0}(\phi=0)$, that is,
  $\pi_{\phi_{V,c_0,c_1}}(\theta_0) \geq \pi_{\phi}(\theta_0)$.
  \end{enumerate}
\end{proof}

\begin{proof}[Proof of \Cref{thm:umpu-1}]
 Since $\alpha+\beta \leq 1$, obtain $c_0 \leq c_1$.
 It follows from \Cref{assumption:umpu-1} that
 $\phi_{V,c_0,c_1}$ is a $(\alpha,\beta)$-level test.
 Let $\phi$ be an unbiased $(\alpha,\beta)$-size test. 
 Therefore, note that
 $\P_{\bar{\theta}}(\phi_{V,c_0,c_1}=1) = \alpha \geq \alpha_{\phi}$ and 
 $\P_{\bar{\theta}}(\phi_{V,c_0,c_1}=0) = \beta \geq \beta_{\phi}$. 
 Conclude from \Cref{lemma:umpu-1} that
 $\phi_{V,c_0,c_1} \succeq \phi$.  
\end{proof}

\begin{proof}[Proof of \cref{thm:umpu-2}]
 Since $\alpha+\beta \leq 1$, obtain
 $c_{1,l} \leq c_{0,l} \leq c_{0,r} \leq c_{1,r}$. 
 Let $\phi$ be an unbiased 
 $(\alpha,\beta)$-size test and
 $\theta_1 \in H_1$.
 Since $\alpha_{g_1(\phi_{V,\cc})} = \alpha_{\phi_{V,\cc}}\geq \alpha_{\phi} = \alpha_{g_1(\phi)}$, it follows from
 \Cref{assumption:umpu-2} and
 \citet{Lehmann2006}[p.151] that one can obtain
 $\P_{\theta_1}(g_1(\phi_{V,\cc})=1) \geq \P_{\theta_1}(g_1(\phi)=1)$.
 Conclude from \Cref{lemma:standardize} that
 $\P_{\theta_1}(\phi_{V,\cc}=1) \geq \P_{\theta_1}(\phi=1)$, which is equivalent to,
 $\pi_{\phi_{V,\cc}}(\theta_1) \geq \pi_{\phi}(\theta_1)$.
 Next, let $\theta_0 \in H_0$.
 Since $\phi$ is an $(\alpha,\beta)$-size test,
 for every $\theta_1 \in H_1$,
 $\P_{\theta_1}(\phi=0) \leq \beta$.
 It follows from \Cref{assumption:umpu-2} that
 $\P_{\theta_0}(\phi=0) \leq \beta$, that is,
 $\pi_{\phi}(\theta_0) \leq \beta$.
 Since $\pi_{\phi_{V,\cc}}(\theta_0)=\beta$,
 obtain $\pi_{\phi_{V,\cc}}(\theta_0) \geq \pi_{\phi}(\theta_0)$.
\end{proof}




%
%

\begin{definition}
 \label{defn:unbiased-stat}
 A statistic, $T \in \Re$, 
 is unbiased for $H_0$ if, 
 for every $t \in \mathbb{R}$,
 $\theta_0 \in H_0$ and
 $\theta_1 \in H_1$,
 $\P_{\theta_0}(T \leq t) \geq \P_{\theta_1}(T \leq t)$.
\end{definition}

\begin{assumption} \
 \label{assumption:p}
 \begin{enumerate}
  \item $\Theta$ is 
  a connected space.
  \item $T$ is 
  an unbiased statistic for $H_0$.
  \item For every $t \in \Re$,
  $\P_{\theta}(T \geq t)$
  is a continuous function over $\theta$.
 \end{enumerate}
\end{assumption}

\begin{lemma}
 \label{lemma:p-1}
 Under \cref{assumption:p},
 for every $t \in \Re$,
 \begin{align*}
  \sup_{\theta_0 \in H_0}\P_{\theta_0}(T > t)
  &= 1-\sup_{\theta_1 \in H_1}
  \P_{\theta_1}(T \leq t) 
 \end{align*}
\end{lemma}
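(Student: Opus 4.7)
\emph{Proof plan.}
First I would rewrite the claim in its ``dual'' form. Since $\P_\theta(T \leq t) = 1 - \P_\theta(T > t)$, pushing the supremum through the complement gives the equivalent statement
\[
\sup_{\theta_0 \in H_0}\P_{\theta_0}(T > t) \;=\; \inf_{\theta_1 \in H_1}\P_{\theta_1}(T > t),
\]
which I would prove as two opposite inequalities.

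The direction $\leq$ is immediate from unbiasedness. Indeed, for every $\theta_0 \in H_0$ and $\theta_1 \in H_1$, \cref{defn:unbiased-stat} at the point $t$ gives $\P_{\theta_0}(T \leq t) \geq \P_{\theta_1}(T \leq t)$, and taking complements yields $\P_{\theta_0}(T > t) \leq \P_{\theta_1}(T > t)$. Taking the supremum over $\theta_0$ on the left and then the infimum over $\theta_1$ on the right preserves the inequality and delivers $\sup_{H_0}\P_\theta(T > t) \leq \inf_{H_1}\P_\theta(T > t)$.

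For the direction $\geq$ I would argue by contradiction using connectedness. Set $a = \sup_{H_0}\P_\theta(T > t)$ and $b = \inf_{H_1}\P_\theta(T > t)$ and suppose $a < b$. Choose $c \in (a,b)$ and define
\[
A = \{\theta \in \Theta : \P_\theta(T > t) < c\}, \qquad B = \{\theta \in \Theta : \P_\theta(T > t) > c\}.
\]
By construction $H_0 \subseteq A$, $H_1 \subseteq B$, $A \cap B = \emptyset$, and (since $\P_\theta(T > t)$ is either $\leq a$ for $\theta \in H_0$ or $\geq b$ for $\theta \in H_1$) $A \cup B = \Theta$. Both sets are nonempty, so if I can show both are open, this contradicts connectedness of $\Theta$ (\cref{assumption:p}.1) and closes the proof.

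To verify openness I would lean on \cref{assumption:p}.3 together with the monotone-limit identities $\P_\theta(T > t) = \lim_{s \downarrow t}\P_\theta(T \geq s) = \sup_{s > t}\P_\theta(T \geq s)$ and $\P_\theta(T \leq t) = \lim_{s \downarrow t}[1-\P_\theta(T \geq s)] = \inf_{s>t}\P_\theta(T<s)$, both taken along a countable sequence $s_k \downarrow t$. As a countable supremum of continuous functions, $\theta \mapsto \P_\theta(T > t)$ is lower semicontinuous, which immediately gives that $B$ is open. The dual infimum representation shows $\theta \mapsto \P_\theta(T \leq t)$ is upper semicontinuous. The subtle---and to my mind the main---technical step is combining these two facts to upgrade $\theta \mapsto \P_\theta(T > t)$ to full continuity so that $A$ is also open; this is where the strength of \cref{assumption:p}.3 (continuity for every $s$, not only at $s=t$) must be used to rule out atom-at-$t$ obstructions, and it is the part of the argument I would expect to require the most care.
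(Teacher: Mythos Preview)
Your $\leq$ direction via unbiasedness is exactly the paper's argument (its \cref{eq:pval-2}). For the $\geq$ direction you and the paper diverge. The paper does not argue by contradiction: it simply observes that, since $\Theta$ is connected and $H_0,H_1$ are nonempty complements, the common boundary $\partial H_0=\partial H_1$ is nonempty, and then, citing continuity in $\theta$, extends $\inf_{H_0}\P_\theta(T\leq t)$ and $\sup_{H_1}\P_\theta(T\leq t)$ to that boundary, where they coincide. This is a two-line direct computation, shorter than your disconnection scheme; your route is a legitimate alternative that makes the role of connectedness equally explicit, just with more machinery.

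Your flagged worry about upgrading lower semicontinuity of $\theta\mapsto\P_\theta(T>t)$ to full continuity (so that $A$ is open) is well placed, and you should know that the paper does not resolve it either. Its boundary step invokes \cref{assumption:p}.3 as though it gave continuity of $\P_\theta(T\leq t)$, but the assumption literally gives continuity of $\P_\theta(T\geq s)$; the two differ by the atom $\P_\theta(T=t)$, and one can construct examples satisfying \cref{assumption:p} in which $\theta\mapsto\P_\theta(T>t)$ is genuinely discontinuous. Also note that your ``two facts to combine'' are really one fact: upper semicontinuity of $\P_\theta(T\leq t)$ is just the complement of the lower semicontinuity of $\P_\theta(T>t)$ you already have, so no combination will manufacture the missing direction. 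In short, the care you anticipated cannot be supplied from \cref{assumption:p}.3 alone; both your argument and the paper's tacitly work in the atomless (equivalently, $\P_\theta(T\leq t)$-continuous) regime that holds in the intended p-value application. Under that reading your set $A$ is open immediately and the contradiction closes with no further work.
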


\begin{proof} 
 Let $\partial H_0$ and $\partial H_1$ denote 
 the boundaries of $H_0$ and $H_1$. Since
 $H_1 = H_0^c$, $\partial H_0 = \partial H_1$.
 Also, since $\Theta$ is connected, 
 $\partial H_0 \neq \emptyset$. Therefore,
 \begin{align}
  \label{eq:pval-1}
  \sup_{\theta_0 \in H_0}\P_{\theta_0}(T > t)
  &= 1-\inf_{\theta_0 \in H_0}
  \P_{\theta_1}(T \leq t) 
  \nonumber \\
  &\geq 1-\inf_{\theta_0 \in \partial H_0}
  \P_{\theta_0}(T \leq t)
  & \text{\cref{assumption:p}.3} 
  \nonumber \\
  &= 1-\inf_{\theta_1 \in \partial H_1}
  \P_{\theta_1}(T \leq t) \nonumber \\
  &\geq 1-\sup_{\theta_1 \in H_1}
  \P_{\theta_1}(T \leq t)
  & \text{\cref{assumption:p}.3}
 \end{align}
 Furthermore,
 \begin{align}
  \label{eq:pval-2}
  \sup_{\theta_0 \in H_0} \P_{\theta_0}(T > t)
  &= 1-\inf_{\theta_0 \in H_0}
  \P_{\theta_0}(T \leq t) 
  \nonumber \\
  &\leq 1-\sup_{\theta_1 \in H_1}
  \P_{\theta_1}(T \leq t) 
  & \text{\cref{assumption:p}.2}
 \end{align}
 The proof follows from
 \cref{eq:pval-1,eq:pval-2}.
\end{proof}

\begin{lemma}
 \label{lemma:unbiased-p}
 If $\Phi$ is a nested family of 
 standard tests for $H_0$ such that, 
 for every $\phi \in \Phi$,
 $\phi$ is unbiased for $H_0$, then
 $1-p_{H_0,\Phi}$ is 
 an unbiased statistic for $H_0$.
\end{lemma}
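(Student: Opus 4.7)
The plan is to unwind the definition of the unbiased statistic and reduce it to an inequality about the p-value. Specifically, $1 - p$ is unbiased for $H_0$ if and only if $\P_{\theta_0}(p < s) \leq \P_{\theta_1}(p < s)$ for every $s \in \Re$, $\theta_0 \in H_0$, and $\theta_1 \in H_1$ (after substituting $s = 1 - t$). The cases $s \leq 0$ and $s > 1$ are trivial since $p$ takes values in $[0,1]$, so the content lies in $s \in (0,1]$.

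The key structural step is to rewrite the event $\{p < s\}$ in terms of the tests in $\Phi$. Using the bijectivity of $\phi \mapsto \alpha_\phi$, for each $s' \in [0,1]$ there is a unique $\phi_{s'} \in \Phi$ with $\alpha_{\phi_{s'}} = s'$. I would then prove the identity
\begin{align*}
 \{x \in \sX : p(x) < s\} \;=\; \bigcup_{s' < s} \{x \in \sX : \phi_{s'}(x) = 1\}.
\end{align*}
The inclusion $\supseteq$ is immediate from \Cref{def::pvalue}, since $\phi_{s'}(x) = 1$ with $s' < s$ places $s'$ inside the set over which the infimum in $p(x)$ is taken. For $\subseteq$, if $p(x) < s$, pick any $\phi \in \Phi$ with $\phi(x) = 1$ and $\alpha_\phi < s$, and any $s' \in (\alpha_\phi, s)$; the nestedness condition gives $\{\phi = 1\} \subseteq \{\phi_{s'} = 1\}$, so $\phi_{s'}(x) = 1$.

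Once the identity is in place, the right-hand side is an increasing union in $s'$, so monotone continuity of $\P_\theta$ yields
\begin{align*}
 \P_\theta(p < s) = \lim_{s' \nearrow s} \P_\theta(\phi_{s'} = 1).
\end{align*}
Since each $\phi_{s'}$ is unbiased and has size $s'$, for $\theta_0 \in H_0$ and $\theta_1 \in H_1$ we have $\P_{\theta_0}(\phi_{s'} = 1) \leq s' \leq \P_{\theta_1}(\phi_{s'} = 1)$. Sending $s' \nearrow s$ then gives $\P_{\theta_0}(p < s) \leq s \leq \P_{\theta_1}(p < s)$, which is the inequality required for $1 - p$ to be an unbiased statistic in the sense of \Cref{defn:unbiased-stat}.

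I expect the only nontrivial step to be the set-identity $\{p < s\} = \bigcup_{s' < s} \{\phi_{s'} = 1\}$, which is where every piece of the nested-family structure (standardness, bijectivity of $g$, and the nesting of rejection regions) is used simultaneously. Everything afterwards is a clean sandwich between the $H_0$-size and $H_1$-unbiasedness bounds of the $\phi_{s'}$, together with continuity of probability from below.
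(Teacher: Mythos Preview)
Your proposal is correct and follows essentially the same route as the paper: index the tests in $\Phi$ by their size $s'$, then sandwich $\P_{\theta_0}(\phi_{s'}=1)\leq s'\leq \P_{\theta_1}(\phi_{s'}=1)$ using the size bound on $H_0$ and unbiasedness on $H_1$. Your version is in fact slightly more careful, since the paper asserts the equality $\P_\theta(p<s)=\P_\theta(\phi^*_s=1)$ directly, whereas your identity $\{p<s\}=\bigcup_{s'<s}\{\phi_{s'}=1\}$ together with the limit $s'\nearrow s$ makes this step rigorous without implicitly assuming that $\{p=s\}\cap\{\phi^*_s=1\}$ is null.
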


\begin{proof}
 For each $t \in [0,1]$, let
 $\phi^*_t \in \Phi$ be 
 such that $\alpha_{\phi^*_t}=t$.
 \begin{align*}
  \P_{\theta_0}(1-p_{H_0} \leq t)
  &= 1-\P_{\theta_0}(p_{H_0} < 1-t) \\
  &= 1-\P_{\theta_0}(\phi^*_{1-t} = 1) \\
  &\geq 1-\alpha_{\phi^*_{1-t}} \\
  &\geq 1-\P_{\theta_1}(\phi^*_{1-t}=1) \\
  &= 1-\P_{\theta_1}(p_{H_0} < 1-t)
  = \P_{\theta_1}(1-p_{H_0} \leq t)
 \end{align*}
\end{proof}

\begin{proof}[Proof of \cref{thm:p}]
 \begin{align*}
  \alpha_{\phi_{\alpha,\beta}}
  &= \sup_{\theta_0 \in H_0} 
  \P_{\theta_0}(1-p_{H_0}(X) > 1-\alpha) \\
  &= \sup_{\theta_0 \in H_0}
  \P_{\theta_0}(p_{H_0}(X) < \alpha)
  = \alpha \\
  \beta_{\phi_{\alpha,\beta}}
  &= \sup_{\theta_1 \in H_1} 
  \P_{\theta_1}(1-p_{H_0}(X) \leq \beta) \\
  &= \sup_{\theta_1 \in H_1}
  \P_{\theta_1}(p_{H_0}(X) \geq 1-\beta) \\
  &= \sup_{\theta_0 \in H_0}
  1-\P_{\theta_0}(p_{H_0}(X) < 1-\beta)
  = \beta 
  & \text{\cref{lemma:p-1,lemma:unbiased-p}}
 \end{align*}
\end{proof}

\begin{proof}[Proof of \cref{thm:region-control}]
 Since $R(x)$ has confidence $1-\alpha$,
 $\P_{\theta}(\theta \notin R(x)) \geq \alpha$,
 for every $\theta \in \Theta$. Therefore,
 \begin{align*}
  \alpha_{\phi_{R,H_0}}
  &= \sup_{\theta_0 \in H_0}
  {\P_{\theta_0}(\phi_{R,H_0}=1)} 
  =  \sup_{\theta_0 \in H_0}
  {\P_{\theta_0}(R(X) \subseteq H_0^c)}
  \leq \sup_{\theta_0 \in H_0}
  {\P_{\theta_0}(\theta_0 \notin R(X))}
  \leq \alpha \\
  \beta_{\phi_{R,H_0}}
  &= \sup_{\theta_1 \in H_1}
  {\P_{\theta_1}(\phi_{R,H_0}=0)} 
  =  \sup_{\theta_1 \in H_1}
  {\P_{\theta_1}(R(X) \subseteq H_0)}
  \leq \sup_{\theta_1 \in H_1}
  {\P_{\theta_1}(\theta_1 \notin R(X))}
  \leq \alpha
 \end{align*}
\end{proof}

\begin{proof}[Proof of \cref{thm:region-unilateral}]
 Let $\theta^* \in \Re$ and
 $R_1(x)$ be a set. We write $\theta^* < R_1(x)$ if,
 for every $\theta(1) \in R_1(x)$,
 $\theta^* < \theta(1)$. 
 Also, $\theta^* > R_1(x)$ if,
 for every $\theta(1) \in R_1(x)$,
 $\theta^* > \theta(1)$.

 For each $x \in \sX$, let
 $R_1(x)=\left\{\theta(1): \phi_{H_{0,\theta(1)}}(x) = \half \right\}$. 
 If $\phi_{H_{0,\theta^*}}(x)=1$, then
 conclude from \cref{assumption:region-unilateral-1}
 that for every $\theta(1) \leq \theta^*$,
 $\phi_{H_{0,\theta(1)}}(x)=1$.
 Therefore, if $\phi_{H_{0,\theta^*}}(x)=1$,
 $\theta^* < R_1(x)$.
 Similarly, if $\phi_{H_{0,\theta^*}}(x)=0$,
 then it follows from 
 \cref{assumption:region-unilateral-1} that
 $\theta^* > R_1(x)$.
 Since $\phi_{H_{0,\theta^*}}(x) \in \left\{0,\half,1\right\}$, conclude that
 $\phi_{H_{0,\theta^*}}(x)=1$ if and only if
 $\theta^* < R_1(x)$  and
 $\phi_{H_{0,\theta^*}}(x)=0$ if and only if
 $\theta^* > R_1(x)$.
 That is, for every $\theta^*$,
 $\phi_{H_{0,\theta^*}}$ is 
 based on $R(x):=R_1(x)\times \mathbb{R} \times \ldots \times \mathbb{R}$ for $H_{0,\theta^*}$.
 
 Finally, if
 \cref{assumption:region-unilateral-2} holds, then
 for every $\theta \in \Theta$,
 \begin{align*}
\P_{\theta}(\theta \in R(X))=  \P_{\theta}(\theta(1) \in R_1(X))
  &= \P_{\theta}\left(\phi_{H_{0,\theta(1)}}
  =\half\right) \geq 1-2\alpha
 \end{align*}
 That is, $R_1(X)$ has 
 confidence $1-2\alpha$ for $\theta(1)$
 and $R(X)$ has 
 confidence $1-2\alpha$ for $\theta$.
\end{proof}

\begin{proof}[Proof of \cref{cor-izbicki}]
 Follows directly from
 \cref{thm:region-logical,thm:region-unilateral,thm:region-control}.
\end{proof}

\begin{proof}[Proof of \cref{cor:ump-region}]
 Let $T$ be such as in \cref{assumption:k-r} and
 $\theta_1, \theta_2, \theta_3 \in \Theta$ be
 such that
 $\theta_1 \leq \theta_2 \leq \theta_3$.
 It follows from \cref{thm:k-r} that
 $\phi_{H_{0,\theta_i}} = \phi_{T,c_{0,\theta_i},c_{0,\theta_i}}$,
 where $c_{0,\theta_i}$ and $c_{1,\theta_i}$
 are such that
 $\sup_{\theta_1 \in H_{1,\theta_i}}\P_{\theta_1}(T \leq c_{0,\theta_i})=\alpha$ and
 $\sup_{\theta_0 \in H_{0,\theta_i}}\P_{\theta_0}(T > c_{1,\theta_i})=\alpha$.
 Since $\theta_1 \leq \theta_2$,
 $H_{0,\theta_1} \subset H_{0,\theta_2}$.
 Therefore, $c_{1,\theta_1} \leq c_{1,\theta_2}$,
 that is, if 
 $\phi_{T,c_{0,\theta_2},c_{1,\theta_2}}(x)=1$, then
 $\phi_{T,c_{0,\theta_1},c_{1,\theta_1}}(x)=1$.
 Similarly, if
 $\phi_{T,c_{0,\theta_3},c_{1,\theta_3}}(x)=0$, then
 $\phi_{T,c_{0,\theta_2},c_{1,\theta_2}}(x)=0$.
 Conclude that, if
 $\phi_{H_{0,\theta_2}}(x)=0$, then
 $\phi_{H_{0,\theta_3}}(x)=0$ and, if
 $\phi_{H_{0,\theta_2}}(x)=1$ then
 $\phi_{H_{0,\theta_1}}(x)=1$.
 Also, for every $\theta^* \in \Theta$,
 it follows from \cref{thm:k-r} and
 the continuity of $\P_{\theta}(T \leq t)$
 over $\theta$ that
 $\P\left(\phi_{H_{0,\theta^*}}=\half\right)=1-2\alpha$. The proof follows directly from
 \cref{thm:region-unilateral}.
\end{proof}

\begin{proof}[Proof of \cref{cor:umpu-region}]
 Since $g(V_{\theta},\theta)$ is ancillary,
 there exist $v_{\alpha}$ and $v_{1-\alpha}$
 such that, for every $\theta \in \Theta$,
 $\P_{\theta}((V_{\theta},\theta) \leq v_{\alpha}) = \alpha$ and 
 $\P_{\theta}(g(V_{\theta},\theta) > v_{1-\alpha}) = \alpha$. Since $g(v,\theta)$ is 
 decreasing over $\theta$,
 for every $\theta \in \Theta$,
 $\P_{\theta}(V_{\theta} \leq g^{-1}(v_{\alpha},\theta)) = \alpha$ and 
 $\P_{\theta}(V_{\theta} > g^{-1}(v_{1-\alpha},\theta)) = \alpha$. Conclude from
 \cref{thm:umpu-1} that
 \begin{align}
  \label{eq:umpu-region-1}
  \phi_{H_{0,\theta^*}}
  =\phi_{V_{\theta^*},
  g^{-1}(v_{\alpha},\theta^*),
  g^{-1}(v_{1-\alpha},\theta^*)}
 \end{align}
 Let $\theta_1 \leq \theta_2 \leq \theta_3$.
 Since $g^{-1}(v,\theta)$ is
 increasing over $\theta$, conclude from
 \cref{eq:umpu-region-1} that,
 if $\phi_{H_{0,\theta_2}}(x)=1$, then
 $\phi_{H_{0,\theta_1}}(x)=1$. Also,
 if $\phi_{H_{0,\theta_2}}(x)=0$, then
 $\phi_{H_{0,\theta_3}}(x)=0$.
 Also, it follows from \cref{thm:umpu-1} that,
 for every $\theta \in \Theta$ such that
 $\theta(1)=\theta^*$,
 $\P_{\theta}\left(\phi_{H_{0,\theta^*}}=\half\right)=1-2\alpha$.
 The proof follows directly from 
 \cref{thm:region-unilateral}.
\end{proof}

\begin{proof}[Proof of \cref{thm:region-bilateral}]
 Let 
 \begin{align*}
  R^{(1)}_1(x) &=
  \left\{\theta^* \in \Re: 
  \phi_{H_{0,\theta^*}} = 0\right\} \\
  R^{(1)}_{2}(x) &=
  \left\{\theta^* \in \Re: 
  \phi_{H_{0,\theta^*}} \in 
  \left\{0,\half\right\}\right\} \\
  R_1(x)&=R^{(1)}_1(x)\times \mathbb{R} \times \ldots \times \mathbb{R} \\
  R_2(x)&=R^{(1)}_2(x)\times \mathbb{R} \times \ldots \times \mathbb{R}
 \end{align*}
 By construction $R_1(x) \subseteq R_2(x)$,
 $\phi_{H_{0,\theta^*}}(x)=0$ if and only if
 $\{\theta^*\} \subseteq R^{(1)}_1(x)$ (and thus
 $\phi_{H_{0,\theta^*}}(x)=0$ if and only if
 $H_{0,\theta^*} \subseteq R_1(x)$)
 and
 $\phi_{H_{0,\theta^*}}(x)=1$ if and only if
 $R^{(1)}_2(x) \subseteq \{\theta^*\}^c$
 (and thus
 $\phi_{H_{0,\theta^*}}(x)=1$ if and only if
 $R_2(x) \subseteq H^c_{0,\theta^*} $).
 That is, $\phi_{H_{0,\theta^*}}(x)$ is
 based on $R_1(x)$ and $R_2(x)$.
 Furthermore, for every $\theta \in \Theta$,
 \begin{align*}
\P_{\theta}(\theta \in R_1(X))=  \P_{\theta}(\theta(1) \in R^{(1)}_1(X)) &=
  \P_{\theta}(\phi_{H_{0,\theta(1)}}=0)
  = \beta \\
\P_{\theta}(\theta  \notin R_2(X))=  \P_{\theta}(\theta(1) \notin R^{(1)}_2(X)) &=
  \P_{\theta}(\phi_{H_{0,\theta(1)}}=1)
  = \alpha
 \end{align*}
 Conclude that $R_1(X)$ and $R_2(X)$ are
 confidence regions with confidence of,
 respectively, $\beta$ and $1-\alpha$.
\end{proof}

\begin{proof}[Proof of \cref{thm:no-consistency}]
 Since $\Theta$ is connected and
 $H_0 \notin \{\emptyset,\Theta\}$,
 $\partial H_0 \neq \emptyset$. Let
 $\theta^* \in \partial H_0$.
 If $\phi_n$ has size $(\alpha_n, \beta_n)$,
 $\sup_{\theta_0 \in H_0}\P_{\theta_0}(\phi_n=1) \leq \alpha_n$ 
 and $\sup_{\theta_1 \in H_1}\P_{\theta_1}(\phi_n=0)\leq \beta_n$.
 It follows from the continuity of
 $\P_{\theta}(\phi_n=i)$ that
 $\P_{\theta^*}(\phi_n=1) \leq \alpha_n$ and
 $\P_{\theta^*}(\phi_n=0) \leq \beta_n$.
 Therefore, for the first part of the theorem, 
 $\pi_{\phi_n}(\theta^*) \leq \max(\alpha, \beta) < 1$.
 That is, $\limn \pi_{\phi_n}(\theta^*) \neq 1$ and
 $\seqn{\phi}$ is not consistent.
 For the second part of the theorem,
 Since $\limn \alpha_n = \limn \beta_n = 0$,
 one obtains that
 $\limn \P_{\theta^*}\left(\phi_n = \half\right) = 1$.
\end{proof}

\end{document}